\definecolor{bole}{rgb}{0.47, 0.27, 0.23}
\definecolor{burgundy}{rgb}{0.5, 0.0, 0.13}
\numberwithin{equation}{section}
\newtheoremstyle{thmlemcorr}{10pt}{10pt}{\itshape}{}{\bfseries}{.}{10pt}{{\thmname{#1}\thmnumber{ #2}\thmnote{ (#3)}}}
\newtheoremstyle{thmlemcorr*}{10pt}{10pt}{\itshape}{}{\bfseries}{.}\newline{{\thmname{#1}\thmnumber{ #2}\thmnote{ (#3)}}}
\newtheoremstyle{defi}{10pt}{10pt}{\itshape}{}{\bfseries}{.}{10pt}{{\thmname{#1}\thmnumber{ #2}\thmnote{ (#3)}}}
\newtheoremstyle{remexample}{10pt}{10pt}{}{}{\bfseries}{.}{10pt}{{\thmname{#1}\thmnumber{ #2}\thmnote{ (#3)}}}
\newtheoremstyle{ass}{10pt}{10pt}{}{}{\bfseries}{.}{10pt}{{\thmname{#1}\thmnumber{ A#2}\thmnote{ (#3)}}}
\theoremstyle{thmlemcorr}
\newtheorem{theorem}{Theorem}
\numberwithin{theorem}{section}
\newtheorem{lemma}[theorem]{Lemma}
\newtheorem{proposition}[theorem]{Proposition}
\theoremstyle{thmlemcorr*}
\newtheorem{theorem*}{Theorem}
\newtheorem{lemma*}[theorem]{Lemma}
\newtheorem{corollary*}[theorem]{Corollary}
\newtheorem{proposition*}[theorem]{Proposition}
\newtheorem{problem*}[theorem]{Problem}
\newtheorem{conjecture*}[theorem]{Conjecture}
\theoremstyle{defi}
\newtheorem{definition}[theorem]{Definition}
\theoremstyle{remexample}
\newtheorem{remark}[theorem]{Remark}
\newtheorem{example}[theorem]{Example}
\theoremstyle{ass}
\newcommand{\Ical}{\mathcal{I}}
\newcommand{\Mcal}{\mathcal{M}}
\newcommand{\Ncal}{\mathcal{N}}
\DeclareMathOperator{\dist}{dist}
\newcommand{\norm}[1]{\|#1\|}
\newcommand{\normb}[1]{\bigl\|#1\bigr\|}
\newcommand{\normBB}[1]{\biggl\|#1\biggr\|}
\newcommand{\abs}[1]{|#1|}
\newcommand{\absb}[1]{\bigl|#1\bigr|}
\newcommand{\dd}{\;\mathrm{d}}
\newcommand{\R}{\mathbb{R}}
\newcommand{\loc}{\mathrm{loc}}
\newcommand{\sbullet}{\begin{picture}(1,1)(-0.5,-2)\circle*{2}\end{picture}}
\newcommand{\frarg}{\,\sbullet\,}
\def\Xint#1{\mathchoice 
{\XXint\displaystyle\textstyle{#1}}%
{\XXint\textstyle\scriptstyle{#1}}%
{\XXint\scriptstyle\scriptscriptstyle{#1}}%
{\XXint\scriptscriptstyle\scriptscriptstyle{#1}}%
\!\int} 
\def\XXint#1#2#3{{\setbox0=\hbox{$#1{#2#3}{\int}$} 
\vcenter{\hbox{$#2#3$}}\kern-.5\wd0}} 
\def\dashint{\,\Xint-}
\numberwithin{equation}{section}
\newcommand{\e}{\varepsilon}
\renewcommand{\epsilon}{\varepsilon}
\renewcommand{\phi}{\varphi}
\newcommand{\note}[1]{\strut{\color{red}#1}}
\title{Higher H\"older regularity for a subquadratic nonlocal parabolic equation}
\author{Prashanta Garain, Erik Lindgren and Alireza Tavakoli}
\date{\today}
\begin{document}

\maketitle

\begin{abstract}\noindent
In this paper, we are concerned with the H\"older regularity for solutions of the nonlocal evolutionary equation
$$
\partial_t u+(-\Delta_p)^s u = 0.
$$ 
Here, $(-\Delta_p)^s$ is the fractional $p$-Laplacian, $0<s<1$ and  $1<p<2$. We establish H\"older regularity with explicit H\"older exponents. We also include the inhomogeneous equation with a bounded inhomogeneity. In some cases, the obtained H\"older exponents are almost sharp. 
Our results complement the previous results for the superquadratic case when $p\geq 2$.
\end{abstract}

\tableofcontents

\section{Introduction and main result}

In this article, we establish higher H\"older continuity of weak solutions for the subquadratic nonlocal parabolic equation
\begin{equation}\label{meqn}
\partial_t u+(-\Delta_p)^s u=0.
\end{equation}
Here $1<p<2$ and $0<s<1$. The operator $(-\Delta_p)^s$ is the fractional $p$-Laplace operator defined by
$$
(-\Delta_p)^s u(x,t)=2\,\text{P.V.}\int_{\mathbb{R}^N}\frac{|u(x,t)-u(y,t)|^{p-2}(u(x,t)-u(y,t))}{|x-y|^{N+ps}}\,dy,
$$
where P.V. denotes the principal value.

The main contribution of this paper is to prove that solutions are almost $\Theta$-regular in space and almost $\Gamma$-regular in time,
where 
$$
\Theta(s,p):=\left\{\begin{array}{rl}
\dfrac{s\,p}{p-1},& \mbox{ if } s<\dfrac{p-1}{p},\\
&\\
1,& \mbox{ if } s\ge \dfrac{p-1}{p},
\end{array}
\right.\quad \mbox{ and }\quad \Gamma(s,p):=\left\{\begin{array}{rl}
1,& \mbox{ if } s<\dfrac{p-1}{p},\\
&\\
\dfrac{1}{s\,p-(p-2)},& \mbox{ if } s\ge \dfrac{p-1}{p}.
\end{array}
\right.
$$
We also allow a bounded right hand side in the equation, in which case these exponents are almost sharp. See Section \ref{sec:comment}.

These results are complementing the already existing H\"older estimates in \cite{BLS}, valid for $p\geq 2$.

\subsection{Main result}
Below we state our main theorem. For notation and relevant definitions, see Section \ref{sec:prel}.
\begin{theorem}
\label{teo:1}
Let $\Omega\subset\mathbb{R}^N$ be a bounded and open set, $I=(t_0,t_1]$, $1<p< 2$ and $0<s<1$. Suppose $(x_0,T_0)\in \Omega\times I$ is such that 
\[
Q_{2R,(2R)^{s\,p}}(x_0,T_0)\Subset\Omega\times I.
\]
Assume that $u\in L^p(J;W^{s,p}_{\mathrm{loc}}(\Omega))\cap L^{p-1}(J;L^{p-1}_{sp}(\mathbb{R}^N))\cap C(J;L^2_{\mathrm{loc}}(\Omega))$ is a local weak solution of 
\[
\partial_t u+(-\Delta_p)^s u=f\qquad \mbox{ in }\Omega\times I,
\]
as in Definition \ref{subsupsolution} where $f \in L^\infty_{\loc}(\Omega \times I)$  and
$$
\norm{u}_{L^\infty(Q_{2R,(2R)^{s\,p}}(x_0,T_0))}+\sup_{-(2R)^{sp}+T_0<t\leq T_0} \mathrm{Tail}_{p-1,s\,p}(u(\frarg,t);x_0,R)   <\infty.
$$
Define the exponents
\begin{equation}
\label{exponents}
\Theta(s,p):=\left\{\begin{array}{rl}
\dfrac{s\,p}{p-1},& \mbox{ if } s<\dfrac{p-1}{p},\\
&\\
1,& \mbox{ if } s\ge \dfrac{p-1}{p},
\end{array}
\right.\quad \mbox{ and }\quad \Gamma(s,p):=\left\{\begin{array}{rl}
1,& \mbox{ if } s<\dfrac{p-1}{p},\\
&\\
\dfrac{1}{s\,p-(p-2)},& \mbox{ if } s\ge \dfrac{p-1}{p}.
\end{array}
\right.
\end{equation}
Then\[
u\in C^\theta_{x,\rm loc}(\Omega\times I)\cap C^\gamma_{t,\rm loc}(\Omega\times I),\qquad \mbox{ for every }0<\theta<\Theta(s,p) \ \mbox{ and } \ 0<\gamma<\Gamma(s,p).
\] 
\par
More precisely, for every $(x_0,T_0)$ such that 
\[
Q_{2R,(2R)^{s\,p}}(x_0,T_0)\Subset\Omega\times I,
\] 
there exists a constant $C=C(N,s,p,\theta,\gamma)>0$ such that
\begin{equation}
\label{eq:xest}
\sup_{t\in \left[T_0-(R/4)^{s\,p},T_0\right]} [u(\cdot,t)]_{C^{\theta}(B_{R/4}(x_0))}\leq CM^{1+\frac{2-p}{sp}}R^{-\theta}
\end{equation}
and
\begin{equation}
\label{eq:test}
\sup_{x\in B_\frac{R}{4}(x_0)}[u(x,\cdot)]_{C^\gamma((-4^{-sp}R^{sp}+T_0,T_0])}\leq C MR^{-\gamma},
\end{equation}
where
\[
M=\|u\|_{L^\infty(Q_{R,R^{sp}}(x_0,T_0))}+\sup_{t\in (-R^{sp}+T_0,T_0]}\mathrm{Tail}_{p-1,s\,p}(u(\cdot,t);x_0,R)^{p-1} + R^{sp}\norm{f}_{L^\infty(Q_{R,R^{sp}}(x_0,T_0))} +1.
\]
\end{theorem}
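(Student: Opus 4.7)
The plan is to reduce the theorem to a quantitative a priori estimate on a unit cylinder via scaling, and then to establish that estimate through a finite-difference Caccioppoli scheme iterated in the spirit of \cite{BLS}, with modifications tailored to the singular character of $(-\Delta_p)^s$ when $p<2$. First, using the $(p-1)$-homogeneity of the operator and the parabolic scaling $t \sim x^{sp}$, I would normalize so that $R=1$, $(x_0,T_0)=(0,0)$, and $M = 1$; in particular the factor $R^{sp}\|f\|_{L^\infty}$ is absorbed into $M$, and the nonlocal tail is tracked throughout as a lower-order perturbation via the standard splitting into a near and a far part.

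The core analytic device is a \emph{Caccioppoli inequality for finite differences}. Writing $u_h(x,t):=u(x+h,t)$ for small $h \in \mathbb{R}^N$, I would test the equation satisfied by $u_h - u$ with $|u_h - u|^{q-2}(u_h-u)\,\eta^2$, where $\eta$ is a smooth space-time cutoff and $q > 2$ an iteration parameter. Combined with the subquadratic monotonicity bound
$$
\bigl(|a|^{p-2}a - |b|^{p-2}b\bigr)(a-b) \ \gtrsim \ (|a|+|b|)^{p-2}(a-b)^2,
$$
this yields an energy estimate controlling $\sup_t \int \eta^2\,|u_h-u|^q\,dx$ together with a Gagliardo-type seminorm of $|u_h-u|^{q/p}\eta$, in terms of $|h|^{sp}$ and lower-order quantities. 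The \textbf{main obstacle} is that the weight $(|u_h|+|u|)^{p-2}$ is singular on the (typically large) region where $u$ is smooth, which is absent for $p\ge 2$ and obstructs a direct iteration; I would address this by inserting a small regularization parameter $\varepsilon>0$ into the nonlinearity, working with the resulting nondegenerate problem, and passing to the limit, as in the treatment of singular nonlocal problems.

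The Caccioppoli estimate then feeds into an \emph{iteration in the exponent $\beta$}. Starting from the baseline $L^\infty$ bound, each application of the fractional Sobolev embedding applied to the Gagliardo seminorm of $|u_h-u|^{q/p}\eta$ upgrades an $L^q$ bound on $|h|^{-\beta}(u_h-u)$ to an $L^{q^*}$ bound on $|h|^{-(\beta+\delta)}(u_h-u)$, with a gain $\delta$ dictated by the balance between the scale $|h|^{sp}$ of the operator and its nonlinear homogeneity $p-1$. Iterating this sufficiently many times reaches any $\beta < sp/(p-1)$ in $L^q_{\loc}$ for arbitrarily large $q$. When $s<(p-1)/p$ the iteration saturates strictly below $\beta = 1$, giving $\Theta(s,p) = sp/(p-1)$; when $s \geq (p-1)/p$ it can be pushed to $\beta = 1$, i.e.\ spatial Lipschitz regularity, after which no further gain in space is available via finite differences. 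A Morrey-type embedding converts the difference-quotient bounds into the pointwise estimate \eqref{eq:xest}.

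Finally, the \emph{time H\"older estimate} \eqref{eq:test} is extracted from the equation. In the regime $s<(p-1)/p$, the spatial regularity $\theta$ close to $sp/(p-1)$ is strong enough that $(-\Delta_p)^s u(\cdot,t)$ is pointwise bounded (the spatial H\"older exponent dominates the kernel singularity at $y=x$), hence $\partial_t u \in L^\infty_{\loc}$ and $u$ is Lipschitz in time, matching $\Gamma = 1$. In the regime $s\ge(p-1)/p$ the spatial Lipschitz bound is insufficient to control $(-\Delta_p)^s u$ pointwise because of the mildly nonintegrable kernel, and a direct argument falls short of the target exponent. Instead one runs a discrete time-difference iteration dual to the spatial one, now with the parabolic balance $\tau \sim |h|^{sp-(p-2)}$ reflecting the genuine scaling of the equation, which yields the sharp exponent $\Gamma = 1/(sp-(p-2))$. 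Combining both space and time estimates and undoing the initial rescaling gives \eqref{eq:xest}--\eqref{eq:test} with a constant $C = C(N,s,p,\theta,\gamma)$, as claimed.
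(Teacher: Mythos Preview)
Your overall architecture (normalize, finite-difference Caccioppoli, iterate, then deduce time regularity) matches the paper's, but there are two genuine gaps.

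\textbf{Handling the degeneracy.} The $\varepsilon$-regularization you propose does not work here: the constants in the resulting Caccioppoli estimate will blow up as $\varepsilon\to 0$, and you will not be able to pass to the limit. The mechanism the paper uses (following \cite{GL}) is different and is the key idea in the subquadratic range. One \emph{assumes} an a priori spatial H\"older bound $[u(\cdot,t)]_{C^\gamma(B_1)}\le 1$ and uses it to control the singular weight directly: since $p-2<0$, the bound $|u(x,t)-u(y,t)|\le |x-y|^\gamma$ gives $(|u(x)-u(y)|+|u_h(x)-u_h(y)|)^{p-2}\ge c\,|x-y|^{\gamma(p-2)}$, which shifts the kernel exponent and yields a $W^{\sigma,2}$-seminorm with $2\sigma=sp-\gamma(p-2)>sp$. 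This is why the paper's iteration has \emph{two} levels: an inner iteration in the Besov exponent $\alpha$ with $\gamma$ fixed, reaching $C^{\tau-\epsilon}$ with $\tau=\min\{sp-\gamma(p-2),1\}$ (Proposition~\ref{prop:improve2}), and then an outer iteration $\gamma_{i+1}=sp-\gamma_i(p-2)-\tfrac{\epsilon(p-1)}{2}$ whose fixed point is $sp/(p-1)$ (Theorem~\ref{teo:localalmost}). Your single iteration ``in the exponent $\beta$'' misses this bootstrap and cannot reach $\Theta$ from the bare $L^\infty$ bound.

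\textbf{Time regularity.} Your argument that $(-\Delta_p)^s u(\cdot,t)$ is pointwise bounded once $\theta$ is close to $sp/(p-1)$ is incorrect. Integrability of $|u(x)-u(y)|^{p-1}/|x-y|^{N+sp}$ near the diagonal requires $\theta(p-1)>sp$, i.e.\ $\theta>sp/(p-1)$, which is \emph{strictly more} than what you have; so $\partial_t u\in L^\infty_{\loc}$ does not follow. The paper does not attempt this. Instead it runs a Campanato argument (Proposition~\ref{prop:timereg}): test the weak formulation with a spatial cutoff $\eta$, estimate the local and nonlocal pieces of the double integral using the spatial $C^\delta$ bound to get an oscillation estimate on parabolic cylinders $Q_{r,\theta}$ of the form $Cr^\delta+C\theta r^{\delta(p-1)-sp}$, and then balance $r$ against $\theta$ and invoke a Campanato--H\"older characterization with respect to the metric $\max\{|x-y|,|t-\tau|^{1/(sp+\delta(2-p))}\}$. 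This gives $\gamma=\delta/(sp+\delta(2-p))$, and letting $\delta\nearrow\Theta$ yields the stated $\Gamma$ in both regimes; there is no separate ``discrete time-difference iteration''.
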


\subsection{Comments on our result}\label{sec:comment}
Regarding the almost sharpness of the spatial H\"older regularity, it suffices to study the stationary problem. Indeed, see in Example 1.5 in \cite{BrLiSc} and Section 1.2 in \cite{GL}, where it is proved that if the inhomogeneity is merely in $L^q$, then solutions may not be $C^{sp/(p-1)}$ in general. See also Example 1.6 in \cite{BrLiSc}, where it is shown that for $p=2$ and $s=1/2$, solutions are not in general Lipschitz in space.

We point out that the time regularity is almost sharp in the case $s p\leq (p-1)$. Indeed, in Remark 1.3 in \cite{BLS} it is proved that solutions are not better than Lipschitz in time in general.

Finally, we wish to comment on the assumption on the tail. This is relevant, since there is a recent development on the regularity theory under the weaker assumption that the tail is merely in $L^q$ for some $q>p-1$ (cf. \cite{BK24}, \cite{Kassarxiv} and \cite{Lia24}), while we in the present paper assume that the tail is in $L^\infty$. This assumption is not unreasonable. Indeed, consider the following modified version of Example 5.2 in \cite{Kassarxiv}.
\begin{example}
Let $0<s<1$ and $1<p<2$ be such that $sp\leq p-1$. Suppose $u_n$ is a weak solution of
\[
 \left\{\begin{array}{rcll}
\partial_tu_n + (-\Delta_p)^su_n&=&0,&\mbox{ in } B_1\times (-1,1],\\
u_n&=&g_n,& \mbox{ on }(\mathbb{R}^N\setminus B_1)\times (-1,1],\\
u_n(\cdot,-1) &=& 0,&\mbox{ on } B_1.
\end{array}\right.
\]
Here, $g_n(x,t)=0$ when $x\in\mathbb{R}^N$ and $t\in (-1,0]$ while for $x\in\mathbb{R}^N$ and $t\in (0,1]$,
$$
g_n(x,t)=\delta f_n(t)+f_n'(t)\chi_{B_3\setminus B_2}(x),
$$
where we  for $n>e$ define
$$
f_n(t)=\begin{cases}
e^{-1},\quad e^{-1}\leq t\leq 1,\\
 -t\ln t, \quad \frac{1}{n}<t<e^{-1},\\
 t\ln n, \quad 0<t\leq\frac{1}{n}.
 \end{cases}
$$
In addition, $\delta >0$ is chosen so that 
$$
\delta + (-\Delta_p)^s \chi_{B_3\setminus B_2}(x)\leq 0,
$$
for all $x\in \R^N$.  
The existence of such a solution $u_n$ is guaranteed by for instance Theorem 1.4 in \cite{GKPT22}. To this end, we note that since $sp\leq (p-1)  <1$, $g_n$ lies in $W^{s,p}(\R^N)$ and with this choice of $f_n$ we have  $f_n,f_n'\in L^\infty(-1,1]$. Hence, $g_n$ is admissible for this theorem. For $x\in B_1$ and $t\in (-1,0]$, we have $\partial_t g_n+(-\Delta_p)^s g_n=0$ since $g_n(x,t)=0$ for $t\in (-1,0]$. We also see that $f_n'(t)\geq 0$ and therefore, for $x\in B_1$ and $t\in(0,1]$, we have
$$
\partial_t g_n+(-\Delta_p)^s g_n = \delta f_n'(t)+f_n'(t)(-\Delta_p)^s \chi_{B_3\setminus B_2}(x)\leq 0.
$$
Hence, $g_n$ is a subsolution in $B_1\times (-1,1)$.  By the comparison principle (see for example Proposition A.3 in \cite{LiLi23}), $u_n\geq g_n$ for $x\in B_1$ and $t\in (-1,1]$ and also $u_n(x,t)=0$\,(note that $g_n(x,t)=0$ for $x\in \R^N$ and $t\in (-1,0]$) for all $x\in B_1$ and $t\in (-1,0]$. This implies that for $x\in B_1$, 
$$
\frac{u_n(x,2/n)-u_n(x,-2/n)}{2/n}\geq \frac{g_n(2/n)-g_n(0)}{2/n}=  \frac{g_n(2/n)}{2/n}=\ln n-\ln 2\to \infty\text{ as }n\to\infty.
$$
Hence, the sequence $\{u_n\}$ is not uniformly Lipschitz in time while $$\mathrm{Tail}_{p-1,s\,p}(u_n(\cdot,t);0,1)^{p-1}\simeq f_n^{p-1}(t)+(f_n')^{p-1}(t)$$ is uniformly bounded in $L^q(-1,1)$ for all $q  <\infty$ but not for $q=\infty$. This shows that we cannot have an apriori Lipschitz estimate in time, in terms of the $L^q$ norm for any finite $q$. Hence, in some sense the $L^\infty$-assumpiton on the tail is almost optimal.

\end{example}
By instead choosing $f_n(t)\sim t^{1-\e}$ one can show for each $q$ there is an upper bound on which H\"older seminorm (in time) can be bounded in terms of the $L^q$ norm of the tail.

\subsection{Known results}
The fractional $p$-Laplace equation
\begin{equation}\label{fracp}
(-\Delta_p)^s u=0,\quad 1<p<\infty,
\end{equation}
has been widely studied and the available literature is vast. For $p=2$, see \cite{Kasscvpde} for a  local H\"older continuity result. We also refer to \cite{Silvestre} and the references therein.

In the nonlinear case $1<p<\infty$, H\"older regularity and Harnack inequalities for weak solutions are proved in \cite{Kuusi1,Kuusi2,Cozzi}. In \cite{Erik16}, H\"older continuity for viscosity solutions is established. H\"older continuity up to the boundary and fine boundary regularity has been studied in \cite{Antoni1, Antoni2,Antoni3,Antoni4}

Higher regularity results for the nonlocal equation \eqref{fracp} are established  in \cite{BrLiSc} for the case $p\geq 2$ and in \cite{GL} for $p<2$ respectively. 

Recently, higher H\"older regularity in the case $p=2$ with a more general kernel is established in \cite{Nowakcvpde}. We also mention \cite{KMS,DS23}, where sharp regularity results for the equation with a right hand side belonging to a Lorentz space are studied.

The parabolic counterpart of the equation \eqref{fracp}, namely equation \eqref{meqn}, has recently been studied by various authors.

When $p=2$, local H\"older continuity of weak solutions for the equation \eqref{meqn} is proved in \cite{KassFel, KassSch, Caf11}. A Harnack  inequality and local boundedness results can  for instance be found  in \cite{Martinanp, Kassarxiv, Kimna, Kimjde, BKK23}. In addition, higher Sobolev regularity results can be found in \cite{Zuazua18}.

In the nonlinear parabolic case, when $p>2$, local boundedness result for the equation \eqref{meqn} is proved in \cite{Martinjde} with zero right-hand side. For the non-zero right-hand side, local boundedness results can be found in \cite{Zhangcvpde} for any $1<p<\infty$ and in \cite{T} for any $p\geq 2$ respectively. Local H\"older continuity is proved in \cite{Zhangcvpde} for $p>2$ and in \cite{Naian, Adi} for any $1<p<\infty$ respectively.

For higher H\"older regularity results, when $p\geq 2$, we refer to \cite{BLS} with zero right hand side and \cite{GKS23,T} for non-zero right hand side respectively. Recently, higher H\"older regularity was obtained in the case $p=2$ allowing for more general kernel and a right hand side, in \cite{BKK23}.

It is worth mentioning that most of the results for nonlocal parabolic equations have been obtained under the assumption that the tail is locally bounded in time. Recently, there has been some advances when less restrictive assumptions on the tail have been assumed. This was initiated in \cite{Kassarxiv} for the linear setting and considered for the case of the fractional $p$-Laplacian in \cite{BK24} and \cite{Lia24}.

\textbf{Plan of the paper:} In Section 2, we introduce some notation and discuss preliminary results used throughout the paper. This is followed by Section 3, where we obtain an improvement of regularity on the Besov scale, given some initial H\"older regularity. This is then iterated and yields the desired spatial regularity in the normalized setting in Section 4. In Section 5, the time regularity is obtained, using the known spatial regularity. Finally, in Section 6, we prove our main result by combining the previous results.
\\

\section{Preliminaries}\label{sec:prel} 
In this section, we introduce some notation and present preliminary results needed in the rest of the paper. Throughout the paper, we shall use the following notation: $B_r(x_0)$ denotes the ball of radius $r$ with center at $x_0$. When $x_0=0$, we write $B_r(0):=B_r$.  Its Lebesgue measure
is given by 
$$
|B_r(x_0)|=\omega(N)r^N,
$$
where $\omega(N)$ is the volume of the unit ball $B_1$. We use the following notation for the parabolic cylinder
$$
Q_{R,r}(x_0,t_0)=B_{R}(x_0)\times(t_0-r,t_0].
$$
Again, when $x_0=0$ and $t_0=0$, we simply write $Q_{R,r}$.
The conjugate exponent $\frac{l}{l-1}$ of $l>1$ will be denoted by $l'$. We write
$c$ or $C$ to denote a positive constant which may vary from line to line or even in
the same line. The dependencies on parameters are written in the parentheses.

For any $1<q<\infty$, we define the monotone function $J_q:\mathbb{R}\to\mathbb{R}$ by
\begin{equation}\label{jp}
J_q(t)=|t|^{q-2}t
\end{equation}
and for $0<s<1$ and $1<p<\infty$ we use the notation
\begin{equation}\label{dmu}
d\mu=\frac{\dd x \dd y}{|x-y|^{N+ps}}.
\end{equation}

If $\psi:\mathbb{R}^N\to\mathbb{R}$ is a function and $h\in\mathbb{R}^N$, then  for any $x\in\mathbb{R}^N$, we define
\[
\psi_h(x)=\psi(x+h),\qquad \delta_h \psi(x)=\psi_h(x)-\psi(x)\\
\]
and
\begin{equation}\label{alpha0}
 \delta^2_h \psi(x)=\delta_h(\delta_h \psi(x))=\psi_{2\,h}(x)+\psi(x)-2\,\psi_h(x).
\end{equation}
Let $h\in\mathbb{R}^N$ and $\phi,\psi:\mathbb{R}^N\to\mathbb{R}$ be two given functions, then the following discrete product rule holds:
\begin{equation}\label{Lrule1}
\delta_h(\phi\psi)(x)=(\psi_h\delta_h\phi+\phi\delta_h\psi)(x),\quad \forall x\in\mathbb{R}^N.
\end{equation}
Similarly, for $\phi:\mathbb{R}^N\times I\to\mathbb{R}$, for any fixed $t\in I$ and for every $x\in\mathbb{R}^N$, we define
\[
\phi_h(x,t)=\phi(x+h,t),\qquad \delta_h \phi(x,t)=\phi_h(x,t)-\phi(x,t)\\
\]
and
\begin{equation}\label{alphabeta}
 \delta^2_h \phi(x,t)=\delta_h(\delta_h \phi(x,t))=\phi_{2\,h}(x,t)+\phi(x,t)-2\,\phi_h(x,t).
\end{equation}
\textbf{Function spaces:} Let $E\subset\mathbb{R}^N$, $N\geq 1$ be an open and bounded subset.
Suppose $v:E\to\mathbb{R}$ is a function. Then for {$0\leq \delta\leq 1$, we define the $\delta$-H\"older seminorm by
$$
[v]_{C^\delta(E)}:=\sup_{x\neq y\in E}\frac{|v(x)-v(y)|}{|x-y|^{\delta}}.
$$

It will be necessary to introduce two function spaces. For this reason, let $1\le q<\infty$ and take  $\psi\in L^q(\mathbb{R}^N)$. For $0<\beta\le 1$, define
\[
[\psi]_{\mathcal{N}^{\beta,q}_\infty(\mathbb{R}^N)}:=\sup_{|h|>0} \left\|\frac{\delta_h \psi}{|h|^{\beta}}\right\|_{L^q(\mathbb{R}^N)}.
\]
The Besov-type spaces $\mathcal{N}^{\beta,q}_\infty$
are defined by 
\[
\mathcal{N}^{\beta,q}_\infty(\mathbb{R}^N)=\left\{\psi\in L^q(\mathbb{R}^N)\, :\, [\psi]_{\mathcal{N}^{\beta,q}_\infty(\mathbb{R}^N)}<+\infty\right\},\qquad 0<\beta\le 1.
\]
The {\it Sobolev-Slobodecki\u{\i} space} is defined as
\[
W^{\beta,q}(\mathbb{R}^N)=\left\{\psi\in L^q(\mathbb{R}^N)\, :\, [\psi]_{W^{\beta,q}(\mathbb{R}^N)}<+\infty\right\},\qquad 0<\beta<1,
\]
where the seminorm $[\,\cdot\,]_{W^{\beta,q}(\mathbb{R}^N)}$ is given by
\[
[\psi]_{W^{\beta,q}(\mathbb{R}^N)}=\left(\iint_{\mathbb{R}^N\times \mathbb{R}^N} \frac{|\psi(x)-\psi(y)|^q}{|x-y|^{N+\beta\,q}}\,\dd x\,\dd y\right)^\frac{1}{q}.
\]
The above spaces are endowed with the norms
\[
\|\psi\|_{\mathcal{N}^{\beta,q}_\infty(\mathbb{R}^N)}=\|\psi\|_{L^q(\mathbb{R}^N)}+[\psi]_{\mathcal{N}^{\beta,q}_\infty(\mathbb{R}^N)},
\] 
and
\[
\|\psi\|_{W^{\beta,q}(\mathbb{R}^N)}=\|\psi\|_{L^q(\mathbb{R}^N)}+[\psi]_{W^{\beta,q}(\mathbb{R}^N)}.
\]
We also introduce the space $W^{\beta,q}(\Omega)$ for a subset $\Omega\subset \mathbb{R}^N$,
\[
W^{\beta,q}(\Omega)=\left\{\psi\in L^q(\Omega)\, :\, [\psi]_{W^{\beta,q}(\Omega)}<+\infty\right\},\qquad 0<\beta<1,
\]
where naturally
\[
 [\psi]_{W^{\beta,q}(\Omega)}=\left(\iint_{\Omega\times \Omega} \frac{|\psi(x)-\psi(y)|^q}{|x-y|^{N+\beta\,q}}\,\dd x\,\dd y\right)^\frac{1}{q}.
\]
The space $W_{0}^{\beta,q}(\Omega)$ is the subspace of $W^{\beta,q}(\mathbb{R}^N)$ consisting of functions that are
identically zero in the complement of $\Omega$.\\

\textbf{Parabolic Banach spaces:} Suppose that $I\subset\mathbb{R}$ is an interval and let $V$ be a separable and reflexive Banach space,
with a norm $\|\cdot\|_{V}$. Let $V^*$ be its topological dual space. Assume that $v$ is a function such $v(t)$ belongs to $V$ for almost every $t\in I$. If the function $t\mapsto \|v(t)\|_V$ is measurable on $I$ and $1\leq p\leq \infty$, then $v$ belongs to the Banach space $L^p(I;V)$ if and only if
$$
\int_{I}\|v(t)\|_{V}^p\,dt<\infty.
$$
It is well known that the dual space of $L^p(I;V)$ can be characterized by
$$
(L^p(I;V))^*=L^p(I;V^*).
$$
See \cite[Theorem 1.5]{Sh97}. We also write $v\in C(I;V)$ if the function $t\mapsto v(t)$ is continuous with respect to topology on $V$. Moreover, $u$ is locally $\alpha$-H\"older continuous ($0<\alpha<1$) in space (respectively, locally $\beta$-H\"older continuous ($0<\beta<1$) in time) on $\Omega\times I$ and write
$$
u\in C^{\alpha}_{x,\mathrm{loc}}(\Omega\times I) \quad \Big(\text{respectively}, u\in C^{\beta}_{t,\mathrm{loc}}(\Omega\times I)\Big)
$$
if for any compact subset $K\times J\subset \Omega\times I$, we have 
$$
\sup_{t\in J}[u(\cdot,t)]_{C^\alpha(K)}<\infty\quad \Big(\text{respectively}, \sup_{x\in K}[u(x,\cdot)]_{C^\beta(J)}<\infty\Big). 
$$

\textbf{Tail spaces and weak solutions:}  The so-called {\it tail spaces} are expedient for nonlocal equations. The {\it tail space} is defined as
\[
L^{q}_{\alpha}(\mathbb{R}^N)=\left\{u\in L^{q}_{\rm loc}(\mathbb{R}^N)\, :\, \int_{\mathbb{R}^N} \frac{|u|^q}{1+|x|^{N+\alpha}}\,\dd x<+\infty\right\},\qquad q>0 \mbox{ and } \alpha>0,
\]
and is endowed with the norm
$$
\|u\|_{L^{q}_{\alpha}(\mathbb{R}^N)}=\left(\int_{\mathbb{R}^N}\frac{|u(x)|^q}{1+|x|^{N+\alpha}}\, \dd x\right)^\frac{1}{q}.
$$
The  global behavior of a function $u\in L^q_{\alpha}(\mathbb{R}^N)$ is measured by the quantity
\begin{equation*}\label{mtail}
\mathrm{Tail}_{q,\alpha}(u;x_0,R)=\left[R^\alpha\,\int_{\mathbb{R}^N\setminus B_R(x_0)} \frac{|u(x)|^q}{|x-x_0|^{N+\alpha}}\,\dd x\right]^\frac{1}{q}.
\end{equation*}
Here $x_0\in\mathbb{R}^N$, $R>0,\,\beta>0$.

\begin{definition}\label{subsupsolution}(Local weak solution)
Let $1<p\leq 2$ and $0<s<1$. Suppose $\Omega\subset\mathbb{R}^N$ is an open and bounded set and that $f\in L^\infty_{\mathrm{loc}}(\Omega\times I)$. For any $t_0,t_1\in\mathbb{R}$, we define $I=(t_0,t_1]$. We say that $u$ is a local weak solution of the equation
$$
\partial_t u+(-\Delta_p)^s u= f \text{ in }\Omega\times I
$$
if for any closed interval $J=[T_0,T_1]\subset I$, the function $u$ is such that
$$
u\in L^p(J;W^{s,p}_{\mathrm{loc}}(\Omega))\cap L^{p-1}(J;L^{p-1}_{sp}(\mathbb{R}^N))\cap C(J;L^2_{\mathrm{loc}}(\Omega))
$$
and it satisfies
\begin{equation}\label{wksol}
\begin{split}
&-\int_{J}\int_{\Omega}u(x,t)\partial_t \phi(x,t)\,\dd x \dd t+\int_{J}\int_{\mathbb{R}^N}\int_{\mathbb{R}^N}J_p(u(x)-u(y))(\phi(x)-\phi(y))\,\dd \mu\, \dd t\\
&=\int_\Omega u(x,T_0)\phi(x,T_0) \,\dd x-\int_\Omega u(x,T_1)\phi(x,T_1) \,\dd x  \; + \int_{J} \int_{\Omega} f(x,t) \phi(x,t) \dd x \dd t ,
\end{split}
\end{equation}
for every $\phi\in L^p(J;W^{s,p}(\Omega))\cap C^1(J;L^2(\Omega))$ which has spatial support compactly contained in $\Omega$. Here 
$$
\dd \mu=\frac{\dd x \dd y}{|x-y|^{N+ps}}
$$
as defined in \eqref{dmu}.
\end{definition}

We have the following embedding result from \cite[Theorem 2.8]{BLS}.
\begin{theorem}\label{thm:Morrey-type-embedding}
Let $\psi\in \mathcal{N}_\infty^{\beta,q}(\mathbb{R}^N)$, where $0<\beta<1$ and $1\leq q<\infty$ such that $\beta q>N$. Then for every $0<\alpha<\beta-\frac{N}{q}$, we have $\psi\in C^\alpha_{\mathrm{loc}}(\mathbb{R}^N)$. More precisely,
$$
\sup_{x\neq y}\frac{|\psi(x)-\psi(y)|}{|x-y|^\alpha}\leq C\Big([\psi]_{\mathcal{N}_\infty^{\beta,q}(\mathbb{R}^N)}\Big)^\frac{\alpha q+N}{\beta q}\Big(\|\psi\|_{L^q(\mathbb{R}^N)}\Big)^\frac{(\beta-\alpha)q-N}{\beta q},
$$
for some positive constant $C=C(N,q,\alpha,\beta)$ which blows up as $\alpha\nearrow \beta-\frac{N}{q}$.
\end{theorem}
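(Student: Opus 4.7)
\medskip

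\textbf{Proof plan for Theorem~\ref{thm:Morrey-type-embedding}.}
The plan is to pass from the finite-difference information in $\mathcal{N}_\infty^{\beta,q}$ to a Campanato-type oscillation estimate, deduce a pure H\"older bound at exponent $\beta-N/q$, and then interpolate with $\|\psi\|_{L^q(\Rbb^N)}$ to reach the scaled form in the statement. We may assume $\psi$ is the Lebesgue representative so that $\psi(x)=\lim_{r\to 0}\fint_{B_r(x)}\psi$ at every point of interest.

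\emph{Step 1: Campanato-type oscillation estimate.} Using Jensen's inequality,
$$
\fint_{B_r(x)}\bigl|\psi(z)-\bar\psi_{B_r(x)}\bigr|^q\,\dd z
\le \fint_{B_r(x)}\fint_{B_r(x)}|\psi(z)-\psi(w)|^q\,\dd w\,\dd z.
$$
Writing $w=z+h$ and applying Fubini with $h\in B_{2r}(0)$ yields
$$
\fint_{B_r(x)}\bigl|\psi(z)-\bar\psi_{B_r(x)}\bigr|^q\,\dd z
\le \frac{1}{|B_r|^2}\int_{B_{2r}(0)}\|\delta_h\psi\|_{L^q(\Rbb^N)}^q\,\dd h
\le C\,r^{\beta q-N}[\psi]_{\mathcal{N}_\infty^{\beta,q}(\Rbb^N)}^q.
$$

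\emph{Step 2: Dyadic telescoping.} For concentric balls, Jensen again gives
$\bigl|\bar\psi_{B_{r/2}(x)}-\bar\psi_{B_r(x)}\bigr|\le C\,r^{\beta-N/q}[\psi]_{\mathcal{N}_\infty^{\beta,q}(\Rbb^N)}$, and since $\beta-N/q>0$ the geometric series
$$
|\psi(x)-\bar\psi_{B_r(x)}|\le\sum_{j=0}^\infty\bigl|\bar\psi_{B_{r/2^{j+1}}(x)}-\bar\psi_{B_{r/2^j}(x)}\bigr|
\le C\,r^{\beta-N/q}[\psi]_{\mathcal{N}_\infty^{\beta,q}(\Rbb^N)}
$$
converges. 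Combining with the average-comparison estimate
$$
\bigl|\bar\psi_{B_r(x)}-\bar\psi_{B_r(y)}\bigr|
=\Bigl|\fint_{B_r(x)}\delta_{y-x}\psi(z)\,\dd z\Bigr|
\le C\,r^{-N/q}\|\delta_{y-x}\psi\|_{L^q(\Rbb^N)}
\le C\,r^{-N/q}|x-y|^{\beta}[\psi]_{\mathcal{N}_\infty^{\beta,q}(\Rbb^N)},
$$
and choosing $r=|x-y|$, we obtain the "pure" H\"older bound
\begin{equation}\label{eq:pureH}
|\psi(x)-\psi(y)|\le C|x-y|^{\beta-N/q}[\psi]_{\mathcal{N}_\infty^{\beta,q}(\Rbb^N)}.
\end{equation}

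\emph{Step 3: Interpolation with $L^q$ and balancing.} The trivial bound $|\bar\psi_{B_r(x)}|\le C r^{-N/q}\|\psi\|_{L^q(\Rbb^N)}$ combined with Step~2 yields, after minimization in $r$, the global bound
$$
\|\psi\|_{L^\infty(\Rbb^N)}\le C\,[\psi]_{\mathcal{N}_\infty^{\beta,q}(\Rbb^N)}^{(N/q)/\beta}\,\|\psi\|_{L^q(\Rbb^N)}^{(\beta-N/q)/\beta}.
$$
Now, given $0<\alpha<\beta-N/q$ and $x\neq y$, set $d=|x-y|$ and introduce a threshold $d^*$ to be chosen. For $d\le d^*$ apply \eqref{eq:pureH} to get $|\psi(x)-\psi(y)|/d^\alpha\le C(d^*)^{\beta-N/q-\alpha}[\psi]_{\mathcal{N}_\infty^{\beta,q}(\Rbb^N)}$, while for $d\ge d^*$ use the trivial bound $|\psi(x)-\psi(y)|/d^\alpha\le 2(d^*)^{-\alpha}\|\psi\|_{L^\infty(\Rbb^N)}$. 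Balancing the two expressions gives $(d^*)^{\beta-N/q}\sim\|\psi\|_{L^\infty(\Rbb^N)}/[\psi]_{\mathcal{N}_\infty^{\beta,q}(\Rbb^N)}$, and substituting the interpolation bound for $\|\psi\|_{L^\infty(\Rbb^N)}$ and simplifying the exponents (which sum to $1$) produces exactly
$$
\frac{|\psi(x)-\psi(y)|}{|x-y|^\alpha}\le C\,[\psi]_{\mathcal{N}_\infty^{\beta,q}(\Rbb^N)}^{\frac{\alpha q+N}{\beta q}}\,\|\psi\|_{L^q(\Rbb^N)}^{\frac{(\beta-\alpha)q-N}{\beta q}}.
$$

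\emph{Main obstacle.} The technical heart is Step~3: verifying that the balancing procedure produces precisely the exponents $(\alpha q+N)/(\beta q)$ and $((\beta-\alpha)q-N)/(\beta q)$ stated in the theorem, and tracking how the constant $C=C(N,q,\alpha,\beta)$ degenerates as $\alpha\nearrow\beta-N/q$. The blow-up arises because the dyadic series in Step~2 and the balancing gap $\beta-N/q-\alpha$ enter multiplicatively into $C$, so the factor $(\beta-N/q-\alpha)^{-1}$ is unavoidable and reflects the sharpness of the embedding.
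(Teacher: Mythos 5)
The paper does not prove this result; it is quoted verbatim from \cite[Theorem 2.8]{BLS}, so there is no in-paper proof to compare against. On its own terms, your proof is correct: Step~1 is the standard Campanato-type oscillation bound for Nikolskii spaces, Step~2 is the standard dyadic telescoping plus a translated-average comparison giving the ``pure'' $C^{\beta-N/q}$ bound, and Step~3 is a clean two-scale interpolation. I checked the exponent bookkeeping: substituting the $L^\infty$ interpolation bound into the balanced estimate does produce exactly $\tfrac{\alpha q+N}{\beta q}$ on the seminorm and $\tfrac{(\beta-\alpha)q-N}{\beta q}$ on the $L^q$ norm, and these exponents are scaling-consistent, so the structure is sound.

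One inaccuracy in your closing remark: your argument does \emph{not} produce a factor $(\beta-N/q-\alpha)^{-1}$, and such a factor is not ``unavoidable.'' The dyadic constant in Step~2 is $\bigl(1-2^{-(\beta-N/q)}\bigr)^{-1}$, which is independent of $\alpha$, and the balancing in Step~3 introduces no further degeneracy in $\alpha$: as $\alpha\nearrow\beta-N/q$ your threshold bound tends to $C[\psi]_{\mathcal{N}^{\beta,q}_\infty}$, which is exactly the pure H\"older estimate from Step~2. So your proof in fact yields a constant uniform as $\alpha\nearrow\beta-N/q$ --- a slightly stronger conclusion than the stated one, entirely compatible with the theorem (which only asserts existence of \emph{some} constant, describing blow-up as a caveat about the constant obtained in the cited source). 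This does not affect the correctness of the proof, but the explanation of the ``main obstacle'' should be dropped or corrected.
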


The result below allows for power functions of the solution to be used as test functions. It can be proved by following the proof of \cite[Lemma 3.3]{BLS}}, see also \cite[Lemma 4.1]{GKS23}.

\begin{lemma}\label{tst}
Let $a>0$, $1<p\leq 2$ and $0<s<1$. Suppose that $u$ is a local weak solution of 
$$
\partial_t u+(-\Delta_p)^s u=f \text{ in }B_2\times (-2^{sp}a,0]
$$
as in Definition \ref{subsupsolution},   where 
$$
f \in L_{\mathrm{loc}}^\infty(B_2 \times (-2^{sp}a,0] ) \quad \text{and} \quad u\in L^\infty (E\times (-a,0]),
$$
for every $E\Subset B_2$.
Let $\eta$ be a non-negative Lipschitz function, with compact support in $B_2$. Let $\tau$ be a smooth non-negative function such that $0\leq \tau\leq 1$ and
$$
\tau(t)=0\text{ for }t\leq T_0,\quad \tau(t)=1\text{ for }t\geq T_1
$$
for some $-a<T_0<T_1<0$. Then, for any locally Lipschitz function $F:\mathbb{R}\to \mathbb{R}$ and any $h\in \mathbb{R}^N$ such that $0<|h|<\frac{1}{4}\dist(\mathrm{supp}\,\eta,\partial B_2)$, we have
\begin{equation}\label{tst1}
\begin{split}
&\int_{T_0}^{T_1}\iint_{\mathbb{R}^{2N}}\Big(J_p(u_h(x,t)-u_h(y,t))-J_p(u(x,t)-u(y,t))\Big)\\
&\times\Big(F(\delta_h u(x,t))\eta(x)^2-F(\delta_h u(y,t))\eta(y)^2\Big)\tau(t) \, \dd \mu \dd t\\
&+\int_{B_2}\mathcal{F}(\delta_h u(x,T_1))\eta(x)^2\, \dd x
=\int_{T_0}^{T_1}\int_{B_2}\mathcal{F}(\delta_h u)\eta^2\tau'\,\dd x \dd t   + \int_{T_0}^{T_1} \int_{B_2} (\delta_h f)F(\delta_h u) \eta^2 \tau \dd x \dd t ,
\end{split}
\end{equation}
where $\mathcal{F}(t)=\int_{0}^{t}F(\rho)\,d\rho.$
\end{lemma}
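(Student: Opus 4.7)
The plan is to derive \eqref{tst1} by using $\phi(x,t) = F(\delta_h u(x,t))\eta(x)^2 \tau(t)$ as a test function in the weak formulation \eqref{wksol}, first for $u$ and then for its spatial translate $u_h$, and subtracting. Note that $u_h$ is a local weak solution of the shifted equation $\partial_t u_h + (-\Delta_p)^s u_h = f_h$ on $B_{2-|h|}\times(-2^{sp}a,0]$. The smallness requirement $|h|<\frac{1}{4}\dist(\supp\eta,\partial B_2)$ ensures that both $F(\delta_h u)\eta^2$ and its translation have spatial support compactly contained in $B_2$, so that both weak formulations can legitimately be tested against it.

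The main obstacle is that $u$ is only assumed to lie in $C(J;L^2_{\loc}(\Omega))$ in time, so $\partial_t(\delta_h u)$ does not exist classically and $\phi$ is not a priori an admissible test function in the sense required by Definition \ref{subsupsolution}. I would overcome this in the standard way, by working with a Steklov (or mollified) average $u_\varepsilon$, for which $\partial_t u_\varepsilon$ exists pointwise, inserting $F(\delta_h u_\varepsilon)\eta^2 \tau$ into the regularized formulation, and passing to the limit $\varepsilon\to 0^+$ at the end. Inside the regularized setting, the spatial integrand naturally produces the bilinear form
$$
\bigl[J_p(u_h(x,t)-u_h(y,t))-J_p(u(x,t)-u(y,t))\bigr]\bigl[F(\delta_h u(x,t))\eta(x)^2-F(\delta_h u(y,t))\eta(y)^2\bigr],
$$
and the forcing produces $(\delta_h f)\,F(\delta_h u)\,\eta^2 \tau$ after subtraction.

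For the time derivative, the key identity is the chain rule applied to the smooth-in-$t$ Steklov average, giving
$$
\partial_t\bigl[\mathcal{F}(\delta_h u_\varepsilon)\bigr] = F(\delta_h u_\varepsilon)\,\partial_t(\delta_h u_\varepsilon).
$$
Multiplying by $\eta^2\tau$, integrating in $x$ over $B_2$ and in $t$ over $[T_0,T_1]$, and integrating by parts in $t$, the contribution at $t=T_0$ vanishes because $\tau(T_0)=0$, the contribution at $t=T_1$ yields $\int_{B_2}\mathcal{F}(\delta_h u_\varepsilon(\frarg,T_1))\eta^2\,\dd x$, and the remaining term $-\int_{T_0}^{T_1}\int_{B_2}\mathcal{F}(\delta_h u_\varepsilon)\eta^2\tau'\,\dd x \dd t$ accounts for the $\tau'$ piece on the right-hand side of \eqref{tst1}. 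Combining with the bilinear and forcing pieces and rearranging gives the identity at the level of $u_\varepsilon$.

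The most delicate step will be justifying the limit $\varepsilon\to 0^+$ in the nonlinear bilinear form. The pointwise convergence of Steklov averages is standard, so the argument reduces to producing an $\varepsilon$-independent dominating integrand. The short-range contribution, where $x,y$ both lie in $\supp\eta\Subset B_2$, is controlled by the local $L^\infty$ bound on $u$ combined with the local Lipschitz property of $F$ and of $a\mapsto J_p(a)=|a|^{p-2}a$ away from the origin; for the long-range contribution one factor of $\eta$ forces $x$ into $\supp\eta$, and the $y$-integral is then dominated by a tail term of the form $\int_{\mathbb{R}^N}|u(y,t)|^{p-1}(1+|y|)^{-N-sp}\,\dd y$, which is finite a.e.~in $t$ by the hypothesis $u\in L^{p-1}(J;L^{p-1}_{sp}(\mathbb{R}^N))$. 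The boundary term at $t=T_1$ converges because $u\in C(J;L^2_{\loc})$ and $\mathcal{F}$ is locally Lipschitz on the range of $\delta_h u$, so that dominated convergence applies after using continuity of $u(\frarg,T_1)$ in $L^2(B_2)$. This yields \eqref{tst1} in the limit.
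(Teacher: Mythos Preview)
Your proposal is correct and follows essentially the same approach as the paper, which simply refers to \cite[Lemma 3.3]{BLS} and \cite[Lemma 4.1]{GKS23} (with $\eta^p$ replaced by $\eta^2$); those arguments proceed exactly via the Steklov-average regularization, the chain-rule/integration-by-parts step producing the $\mathcal{F}$ terms, and the passage to the limit that you describe. One minor remark: for the dominated-convergence step on the short-range piece you do not need any Lipschitz property of $J_p$ near the origin (which fails for $p<2$), only its continuity together with the uniform bound $|J_p(a)|\le |a|^{p-1}$ and the local $L^\infty$ bound on $u$.
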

\begin{proof}
The proof follows exactly as in the proof of \cite[Lemma 4.1]{GKS23} as well as \cite[Lemma 3.3]{BLS} by replacing $\eta^p$ with $\eta^2$.

\end{proof}
\section{Improved Besov regularity}
 In this section, we obtain an improvement of regularity on the Besov scale, given some initial H\"older regularity.  
\begin{proposition}\label{prop:Improved-Besov}
    Let $1<p<2$, $0<s<1$ and $a>0$. Suppose that $u$ is a local weak solution of 
\[
\partial_t u + (-\Delta_p)^s u= f \quad \text{in} \quad B_2\times (-2^{sp}a,0]
\] 
 where  
    \[
    \norm{f}_{L^\infty (B_1\times (-a,0]) }\leq 1, \quad \norm{u}_{L^\infty(B_1\times (-a,0])}\leq 1,\]
    
    \[
    \sup_{-a<t\leq 0} \mathrm{Tail}_{p-1,s\,p}(u(\frarg,t);0,1) \leq 1\quad \text{and}  \quad \sup_{-a<t \leq 0}[u(\frarg,t)]_{C^\gamma(B_1)} \leq 1,
    \]
    for some $\gamma \in [0,1)$. Assume also that for some $\alpha \in [0,1)$, $1\leq q< \infty$ and $0<h_0< \frac{1}{10}$, we have
    \begin{equation}\label{h1}
    \int_{T_0}^{0} \sup_{0 < \abs{h} < h_0} \normBB{\frac{\delta_h^2 u}{\abs{h}^\alpha}}_{L^q(B_{R+ h_0})}^q  \dd t< \infty
    \end{equation}
     for $R$ such that $4h_0 < R \leq 1-5h_0$, and for $T_0$ such that $-a < T_0 < 0$. Then for any $\rho$ such that $T_0 <\rho+ T_0<0 $, we have
    \begin{equation}\label{eq:improved-besov}
    \begin{aligned}
        \int_{T_0 +\rho}^{0} &\sup_{0<\abs{h}<h_0} \left\| \frac{\delta_h^2 u}{\abs{h}^{\frac{sp-\gamma(p-2) + \alpha q}{q+1}}}\right\|_{L^{q+1}(B_{R-4h_0})}^{q+1}  \dd t +\sup_{T_0+\rho < t \leq 0} \sup_{0< \abs{h} <h_0} \int_{B_{R-4h_0}} \frac{\abs{\delta_h u(x,t)}^{q + 1}}{\abs{h}^{\alpha q}}  \dd x \\
        &\leq \frac{C}{\min\lbrace\rho, 1 \rbrace} \int_{T_0}^{0}\left( 1+ \sup_{0<\abs{h}<h_0} \left\| \frac{\delta_h^2 u}{\abs{h}^\alpha} \right\|_{L^q(B_{R+h_0})}^q \right) \dd t,
        \end{aligned}
    \end{equation}
    for some positive constant $C=C(N,s,p,q,h_0,\gamma)$.
\end{proposition}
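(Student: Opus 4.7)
The plan is to apply Lemma~\ref{tst} with $F(s) = J_{q+1}(s) = |s|^{q-1}s$ (so that $\mathcal{F}(s) = |s|^{q+1}/(q+1)$), a Lipschitz spatial cutoff $\eta$ with $\eta\equiv 1$ on $B_{R-4h_0}$ and supported in $B_{R-2h_0}$, and a smooth non-decreasing time cutoff $\tau$ vanishing at $T_0$, equal to one on $[T_0+\rho,0]$, with $|\tau'|\leq C/\rho$. The lemma then yields an identity of the form
\[
\tfrac{1}{2}\mathcal{E}_{\mathrm{nl}} + \int\mathcal{F}(\delta_h u(\cdot,T_1))\eta^2\dd x = \int_{T_0}^{T_1}\!\int\mathcal{F}(\delta_h u)\eta^2\tau'\dd x\dd t + \int_{T_0}^{T_1}\!\int(\delta_h f)F(\delta_h u)\eta^2\tau\dd x\dd t,
\]
for $T_1\in(T_0+\rho,0]$, where $\mathcal{E}_{\mathrm{nl}}$ is the nonlocal bilinear integral.

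The heart of the proof is a coercivity bound for $\mathcal{E}_{\mathrm{nl}}$. Set $A = u_h(x)-u_h(y)$, $B = u(x)-u(y)$, $C = \delta_h u(x)$, $D = \delta_h u(y)$, so that $A-B = C-D$. Symmetrize
\[
F(C)\eta(x)^2 - F(D)\eta(y)^2 = \tfrac{1}{2}(F(C)-F(D))(\eta(x)^2+\eta(y)^2) + \tfrac{1}{2}(F(C)+F(D))(\eta(x)^2-\eta(y)^2).
\]
For the symmetric piece, combine the two algebraic inequalities
\[
|J_p(A)-J_p(B)| \geq c(|A|+|B|)^{p-2}|A-B| \quad (1<p<2), \qquad (J_{q+1}(C)-J_{q+1}(D))(C-D) \geq c|C-D|^{q+1} \quad (q\geq 1),
\]
with the Hölder hypothesis $[u(\cdot,t)]_{C^\gamma(B_1)}\leq 1$, which gives $|A|,|B|\leq |x-y|^\gamma$. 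Since $p-2<0$, the factor $(|A|+|B|)^{p-2}$ is bounded below by $c|x-y|^{\gamma(p-2)}$, so that
\[
\mathcal{E}_{\mathrm{nl}} \geq c\int_{T_0}^{T_1}\iint \frac{|\delta_h u(x)-\delta_h u(y)|^{q+1}}{|x-y|^{N+sp-\gamma(p-2)}}(\eta(x)^2+\eta(y)^2)\dd x\dd y\dd t - (\text{error}).
\]
The antisymmetric error is controlled via $|J_p(A)-J_p(B)|\leq C|A-B|^{p-1}$ (Hölder continuity of $J_p$ for $p<2$) together with the Lipschitz bound $|\eta^2(x)-\eta^2(y)|\leq C_\eta|x-y|$; using the a priori $L^\infty$ and Hölder bounds on $u$, this gives a contribution bounded by a constant. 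The tail contribution from $y\notin B_R$ is dominated by the uniform tail hypothesis.

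For the right-hand side of the identity: the time-derivative term is bounded by $(C/\rho)\int\int |\delta_h u|^{q+1}\eta^2$, and the inhomogeneity term by $C\int\int|\delta_h u|^q\eta^2$. Dividing by $|h|^{\alpha q}$ and using $|\delta_h u|\leq 2$ reduces both to controlling $\int\|\delta_h u\|_{L^q(B_R)}^q/|h|^{\alpha q}\dd t$. A dyadic/Littlewood--Paley telescoping argument (writing $\delta_h u = \delta_{2^k h}u/2^k - \sum_{j<k}\delta_{2^j h}^2 u/2^{j+1}$ and summing, using $\|u\|_{L^\infty}\leq 1$) gives the equivalence
\[
\|\delta_h u\|_{L^q(B_R)}^q/|h|^{\alpha q} \leq C + C\sup_{|h'|<h_0}\|\delta_{h'}^2 u/|h'|^\alpha\|_{L^q(B_{R+h_0})}^q,
\]
which is exactly the structure of the RHS of \eqref{eq:improved-besov}. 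Finally, to convert the Sobolev--Slobodecki\u{\i} bound on $\delta_h u$, namely $\int[\delta_h u]_{W^{\sigma,q+1}(B_{R-4h_0})}^{q+1}\dd t$ with $\sigma(q+1) = sp-\gamma(p-2)$, into the claimed $L^{q+1}$ bound on $\delta_h^2 u = \delta_h(\delta_h u)$, use the standard Besov embedding $W^{\sigma,q+1}\hookrightarrow\mathcal{N}^{\sigma,q+1}_\infty$ and specialize the finite difference to $h'=h$:
\[
\|\delta_h^2 u\|_{L^{q+1}}^{q+1} \leq |h|^{\sigma(q+1)}[\delta_h u]_{\mathcal{N}^{\sigma,q+1}_\infty}^{q+1} \leq C|h|^{\sigma(q+1)}[\delta_h u]_{W^{\sigma,q+1}}^{q+1}.
\]
Dividing by $|h|^{\sigma(q+1)+\alpha q} = |h|^{\beta(q+1)}$ and taking $\sup_{|h|<h_0}$ yields the spatial integral in \eqref{eq:improved-besov}, while the supremum-in-time piece follows directly from the terminal data contribution $\int\mathcal{F}(\delta_h u(\cdot,T_1))\eta^2\dd x$ after division by $|h|^{\alpha q}$ and taking sup over $h$ and $T_1\in[T_0+\rho,0]$.

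The main technical obstacle is precisely the singular factor $(|A|+|B|)^{p-2}$ produced by the subquadratic structure $1<p<2$; the Hölder regularity hypothesis is exactly the ingredient that converts this singularity into a genuine gain $\gamma(2-p)$ of fractional order, raising the effective exponent from $sp$ to $sp-\gamma(p-2)$. Secondary difficulties are the careful accounting of the cutoff and tail errors to ensure the constants depend only on $N,s,p,q,h_0,\gamma$, and the Besov/telescoping step relating second-difference data to first-difference data, which is what allows the RHS to be closed by the assumption.
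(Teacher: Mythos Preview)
Your strategy matches the paper's: apply Lemma~\ref{tst} with $F=J_{q+1}$, extract coercivity from the local double integral using the $C^\gamma$ hypothesis to convert the degenerate weight $(|A|+|B|)^{p-2}$ into a gain $|x-y|^{\gamma(p-2)}$, estimate tail and right-hand side terms, convert the resulting Sobolev control on $\delta_h u$ into the second-difference left-hand side, and reduce first differences on the right to second differences by telescoping (this is exactly \cite[Lemma~2.6]{BrLiSc}, which the paper invokes). Two points, however, are not handled correctly in your sketch.

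First, the antisymmetric cutoff error is not ``bounded by a constant.'' Your bound $|J_p(A)-J_p(B)|\leq C|A-B|^{p-1}$ together with $|\eta^2(x)-\eta^2(y)|\leq C|x-y|$ and the $L^\infty$ bound produces an integrand of order $|x-y|^{1-N-sp}$, which diverges on the diagonal when $sp\geq 1$; using in addition $|A-B|\leq 2|x-y|^\gamma$ only rescues this when $\gamma(p-1)+1>sp$, which fails e.g.\ at the base step $\gamma=0$ of the iteration whenever $sp>1$. The paper instead imports the pointwise estimate from \cite[Proposition~3.1]{GL}, which balances against the coercive term via Young's inequality and leaves \emph{single} integral residuals $\int_{B_R}|\delta_h u|^{p+q-1}$ and $\int_{B_R}|\delta_h u|^{q+1}$ (see \eqref{estI11}); these are then harmless via $|\delta_h u|\leq 2$ and are absorbed into the same right-hand side as your $\tau'$ and $f$ terms. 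So the error is lower order in that sense, not a constant.

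Second, you omit the endpoint $T_1=0$. Lemma~\ref{tst} requires $T_1<0$, so the paper first proves the estimate for $T_1<0$ with a constant independent of $T_1$, and then lets $T_1\to 0^-$ using monotone convergence for the time integral and the $C(J;L^2_{\loc})$ continuity of $u$ for the terminal term; the sup over $T_1\in[T_0+\rho,0]$ then gives \eqref{eq:improved-besov}.
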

\begin{proof}
We prove the result in two steps. In the first, we prove 
\begin{equation}\label{eq:improved-besov-t1-main}
\begin{aligned}
\int_{T_0+ \rho}^{T_1} \sup_{0< \abs{h}<h_0} \int_{B_{R-4h_0}} & \left| \frac{\delta_h^2 u}{\abs{h}^{\frac{sp-\gamma(p-2) + \alpha q}{q +1}}} \right|^{q + 1} \dd x \dd t + \sup_{0< \abs{h} <h_0} \int_{B_{R-4h_0}} \frac{\abs{\delta_h u(x,T_1)}^{q + 1}}{\abs{h}^{\alpha q}}  \dd x\\
&\leq\frac{C}{\min\lbrace\rho, 1 \rbrace} \int_{T_0}^{T_1}\left( 1+ \sup_{0<\abs{h}<h_0} \left\| \frac{\delta_h^2 u}{\abs{h}^\alpha} \right\|_{L^q(B_{R+h_0})}^q \right) \dd t,	
\end{aligned}
\end{equation}
 with the upper limit $T_1<0$ and in Step 2, we prove \eqref{eq:improved-besov-t1-main} for $T_1=0$, then \eqref{eq:improved-besov} follows by taking the supremum over $T_1$.\\
\textbf{Step 1: Estimate for $T_1<0$. } Assume $\rho+T_0<T_1<0$, let $r= R-4h_0$ and take $\eta$ to be a non-negative smooth function on $\mathbb{R}^N$ such that
    \[
    0\leq\eta\leq 1\text{ on }B_R,\quad 
    \eta \equiv 1 \quad \text{on}\quad B_r,\quad
    \eta\equiv 0\text{ in }\mathbb{R}^N\setminus B_\frac{R+r}{2}\quad \text{and} \quad \abs{\nabla \eta}
   \leq\frac{C}{h_0}\text{ on }B_\frac{R+r}{2},
    \]
     for some positive constant $C$.
    
Take also  $\tau:\R\to \R$ to be a smooth function such that $0 \leq \tau \leq 1$ in $\mathbb{R}$ and 
\[
\tau \equiv 1 \quad \text{on} \quad [T_0+\rho , \infty), \quad \tau \equiv 0 \quad \text{on } (-\infty , T_0], \quad \text{and} \quad \abs{\tau^\prime} \leq \frac{C}{\rho}\text{ in }\mathbb{R},
\]
for some positive constant $C$. \normalcolor Let $1\leq q<\infty$, $0<|h|<h_0$ and $\alpha\in[0,1)$ be as in the statement of the theorem and define $\vartheta=\alpha-\frac{1}{q}$. Lemma \ref{tst}  with $F(t)= J_{q+1}(t)$ implies upon dividing with $\abs{h}^{1+ \vartheta q}$ that
    \begin{equation}\label{eng1}
\begin{split}
\int_{T_0}^{T_1}& \iint_{\mathbb{R}^N\times\mathbb{R}^N} \frac{\Big(J_p(u_h(x,t)-u_h(y,t))-J_p(u(x,t)-u(y,t))\Big)}{|h|^{1+\vartheta q}}\\
&\times\Big(J_{q+1}(\delta_h u(x,t))\,\eta(x)^2-J_{q+1}(\delta_hu(y,t))\,\eta(y)^2\Big)\tau(t)\,\dd \mu\, \dd t\\
&+\frac{1}{q+1}\int_{B_2}\frac{|\delta_h u(x,T_1)|^{q+1}}{|h|^{1+\vartheta q}}\, \eta^2 \dd x\,\\
&=\frac{1}{q+1}\int_{T_0}^{T_1}\int_{B_2}\frac{|\delta_h u|^{q+1}}{|h|^{1+\vartheta q}}\, \eta^2 \,\tau'\, \dd x\, \dd t 
 \; + \int_{T_0}^{T_1} \int_{B_2} \delta_h f \frac{J_{q+1}(\delta_h u)}{\abs{h}^{1+\vartheta q}} \eta^2 \tau \dd x \dd t.
\end{split}
\end{equation}
The triple integral is now divided into three pieces:
\[
\widetilde{\mathcal{I}_i}:=\int_{T_0}^{T_1} {\mathcal{I}_i}(t)\,\tau(t)\,   dt, \qquad i=1,2,3,
\]
where we for $t\in (T_0,T_1)$ define
\[
\begin{split}
\Ical_1(t):=\iint_{B_R\times B_R}& \frac{\Big(J_p(u_h(x,t)-u_h(y,t))-J_p(u(x,t)-u(y,t))\Big)}{|h|^{1+\vartheta q}}\\
&\times\Big(J_{q+1}(\delta_h u(x,t))\,\eta(x)^2-J_{q+1}(\delta_h u(y,t))\,\eta(y)^2\Big)\,\dd\mu ,
\end{split}
\]
\[
\begin{split}
\Ical_2(t):=\displaystyle\iint_{(x,y)\in B_\frac{R+r}{2}\times ( \mathbb{R}^N\setminus B_R)}& \frac{\Big(J_p(u_h(x,t)-u_h(y,t))-J_p(u(x,t)-u(y,t))\Big)}{|h|^{1+\vartheta q}} J_{q+1}(\delta_h u(x,t))\,\eta(x)^2 \dd \mu ,
\end{split}
\]
and
\[
\begin{split}
\Ical_3(t):=-\displaystyle\iint_{(x,y)\in (\mathbb{R}^N\setminus B_R)\times B_\frac{R+r}{2}}& \frac{\Big(J_p(u_h(x,t)-u_h(y,t))-J_p(u(x,t)-u(y,t))\Big)}{|h|^{1+\vartheta q}} J_{q+1}(\delta_h u(y,t))\,\eta(y)^2 \dd \mu,
\end{split}
\]
where we used that $\eta$ vanishes identically outside $B_\frac{R+r}{2}$. We also denote the terms on the right-hand side of \eqref{eng1} by $\Ical_4$ and $\Ical_5$:
$$\Ical_4:= \frac{1}{q+1}\int_{T_0}^{T_1}\int_{B_2}\frac{|\delta_h u|^{q+1}}{|h|^{1+\vartheta q}}\, \eta^2\,\tau'\, \dd x\, \dd t,$$
and
$$  \Ical_5:=  \int_{T_0}^{T_1} \int_{B_2} \delta_h f \frac{J_{q+1}(\delta_h u)}{\abs{h}^{1+\vartheta q}} \eta^2 \tau \dd x \dd t.$$
Therefore, with the above notation, equation \eqref{eng1} reduces to the following equation
\begin{equation}\label{eng2}
\widetilde{\Ical}_1 +\frac{1}{q+1}\int_{B_2}\frac{|\delta_h u(x,T_1)|^{q+1}}{|h|^{1+\vartheta q}}\, \eta^2 \dd x = -\widetilde{\Ical}_2 -\widetilde{\Ical}_3 + \Ical_4.
\end{equation}
\textbf{Estimate of $\widetilde{\mathcal{I}_1}$: Since $1<p<2$}, by the same argument as in the proof of \cite[Pages $10- 13$, Proposition 3.1]{GL}, we obtain for any $t\in(T_0,T_1)$
\begin{equation}\label{estI11}
\begin{split}
 \mathcal{I}_1(t)
\geq c^{-1}
\left[ \frac{\abs{\delta_h u}^{\frac{q-1}{2}}\delta_h u}{\abs{h}^{\frac{1+\vartheta q}{2}}} \eta \right]_{W^{\sigma,2}(B_R)}^2 - C \int_{B_R} \frac{\abs{\delta_h u(x,t)}^{p+q -1}}{\abs{h}^{1+ \vartheta q}} - C \int_{B_R} \frac{\abs{\delta_h u(x,t)}^{q +1}}{\abs{h}^{1+ \vartheta q}}.
\end{split}
\end{equation}
After integration with respect to $t$ this becomes
\begin{equation}\label{estI1}
\begin{split}
\widetilde{\mathcal{I}_1}&
\geq c^{-1}
\int_{T_0}^{T_1}
\left[ \frac{\abs{\delta_h u}^{\frac{q-1}{2}}\delta_h u}{\abs{h}^{\frac{1+\vartheta q}{2}}} \eta \right]_{W^{\sigma,2}(B_R)}^2\tau\,dt - C \int_{T_0}^{T_1}\int_{B_R} \frac{\abs{\delta_h u(x)}^{p+q -1}}{\abs{h}^{1+ \vartheta q}}\tau\dd x\,dt - C \int_{T_0}^{T_1}\int_{B_R} \frac{\abs{\delta_h u(x)}^{q +1}}{\abs{h}^{1+ \vartheta q}} \tau\dd x\,dt,
\end{split}
\end{equation}
where $c=c(p,q)$ and $C=C(N,s,p,q,h_0)$ are positive constants, and $2\sigma = sp -\gamma(p-2)$. We remark that the properties of $\eta$ and the given condition $\sup_{-a\leq t \leq 0}[u(\frarg,t)]_{C^\gamma(B_1)} \leq 1$ are used to derive the above estimate of $\widetilde{\mathcal{I}_1}$. Using \eqref{estI1} in \eqref{eng2}, we arrive at
\begin{equation}\label{eq:sobolev-estimate1}
\begin{aligned}
\int_{T_0}^{T_1} &\left[ \frac{\abs{\delta_h u}^{\frac{q-1}{2}}\delta_h u}{\abs{h}^{\frac{1+\vartheta q}{2}}} \eta \right]_{W^{\sigma,2}(B_R)}^2 \tau \dd t  +\int_{B_2}\frac{|\delta_h u(x,T_1)|^{q+1}}{|h|^{1+\vartheta q}}\, \eta^2 \dd x \\
&\leq C\left( \int_{T_0}^{T_1}\int_{B_R} \frac{\abs{\delta_h u(x,t)}^{q +1}}{\abs{h}^{1+ \vartheta q}} \tau \dd x \dd t + \int_{T_0}^{T_1} \int_{B_R} \frac{\abs{\delta_h u(x,t)}^{p+q -1}}{\abs{h}^{1+ \vartheta q}} \tau \dd x 
 \dd t\right) \\
 &+ c\left(\abs{\widetilde{\Ical}_2}+ \abs{ \widetilde{\Ical}_3} +\abs{\Ical_4} + \abs{\Ical_5}  \right),
 \end{aligned}
\end{equation}
 where $C=C(N,s,p,q,h_0)$ is a positive constant. \\
\textbf{Estimate of the nonlocal terms $\widetilde{\Ical}_2$ and $\widetilde{\Ical}_3$:} First, we estimate $\Ical_2(t)$ for any $t\in (T_0,T_1)$. Since $|u|\leq 1$ in $B_1\times (-a,0]$, we have
\begin{equation}\label{estI2new}
\absb{(J_p(u_h(x,t)-u_h(y,t))-J_p(u(x,t)-u(y,t)))J_{q + 1}(\delta_h u(x,t))} \leq C(p)(1+\abs{u_h(y,t)}^{p-1} + \abs{u(y,t)}^{p-1})\abs{\delta_h u(x,t)}^q,
\end{equation}
where $x \in B_{\frac{R+r}{2}} $, $y \in \R^N \setminus B_{R}$ and $4h_0 < R \leq 1-5h_0$. Therefore, $\abs{x-y} \geq h_0 \abs{y}$ and we get
\begin{equation}\label{estuuh}
\begin{split}
&\int_{\R^N \setminus B_R} \frac{1 + \abs{u(y,t)}^{p-1} + \abs{u_h(y,t)}^{p-1}}{\abs{x-y}^{N+sp}} \dd y \\
&\quad \leq \,C\left(1+\int_{\R^N \setminus B_R} \frac{\abs{u(y,t)}^{p-1}}{\abs{y}^{N+sp}} \dd y
+ \int_{\R^N \setminus B_R} \frac{\abs{u_h(y,t)}^{p-1}}{\abs{y}^{N+sp}} \dd y\right),
\end{split}
\end{equation}
for some positive constant  $C=C(N,s,p,h_0)$.
Now, since $4h_0<R$, $R<1$, $|u|\leq 1$ in $B_1\times(-a,0]$ and $\mathrm{Tail}_{p-1, sp}(u(\cdot,t);0,1)\leq 1$ in $(-a,0]$, we have
\begin{equation}\label{estu}
\begin{split}
\int_{\R^N \setminus B_R} \frac{\abs{u(y,t)}^{p-1}}{\abs{y}^{N+sp}} \dd y &\leq \int_{\R^N \setminus B_1} \frac{\abs{u(y,t)}^{p-1}}{\abs{y}^{N+sp}} \dd y + R^{-N-sp} \int_{B_1} \abs{u(y,t)}^{p-1} \dd y \\
&\leq  \int_{\R^N \setminus B_1} \frac{\abs{u(y,t)}^{p-1}}{\abs{y}^{N+sp}} \dd y + N\omega_N {(4h_0)}^{-N-sp} \\
&\leq C,
\end{split}
\end{equation}
where $C= C(N,s,p,h_0)$. As for $u_h$, we have 
\begin{equation}\label{estuh}
\begin{split}
\int_{\R^N \setminus B_R} \frac{\abs{u(y+h,t)}^{p-1}}{\abs{y}^{N+sp}} \dd y &\leq
\int_{\R^N \setminus B_R(h)} \frac{\abs{u(y,t)}^{p-1}}{\abs{y-h}^{N+sp}} \dd y \leq 
(\frac{3}{2})^{N+sp} \int_{\R^N \setminus B_R(h)} \frac{\abs{u(y,t)}^{p-1}}{\abs{y}^{N+sp}} \dd y \\ 
&\leq (\frac{3}{2})^{N+sp} \left[ \int_{\R^N \setminus B_1} \frac{\abs{u(y,t)}^{p-1}}{\abs{y}^{N+sp}} \dd y + (R-h_0)^{-N-sp} \int_{B_1} \abs{u(y,t)}^{p-1} \dd y \right] \\
&\leq C\left( \mathrm{Tail}_{p-1,s\,p}(u(\cdot,t);0,1)^{p-1} +(3h_0)^{-N-sp} \right) \\
&\leq C,
\end{split}
\end{equation}
for some positive constant $C=C(N,s,p,h_0)$. To obtain the above estimate, we have used that $B_R(h) \subset B_1$, $4h_0<R\leq 1-5h_0$ and 
$$\frac{\abs{y-h}}{\abs{y}} = \abs{\frac{y}{\abs{y}}-\frac{h}{\abs{y}}} \geq \abs{\frac{y}{\abs{y}}} - \abs{\frac{h}{\abs{y}}} \geq 1- {\frac{h_0}{R-h_0}} \geq \frac{2}{3}.$$ Using \eqref{estu} and \eqref{estuh} in \eqref{estuuh}, we get
\begin{equation}\label{estuuh1}
\begin{split}
\int_{\R^N \setminus B_R} \frac{1 + \abs{u(y,t)}^{p-1} + \abs{u_h(y,t)}^{p-1}}{\abs{x-y}^{N+sp}} \dd y &\leq C(N,s,p,h_0).
\end{split}
\end{equation}
Using \eqref{estI2new} and \eqref{estuuh1} and recalling the expression of ${\Ical_2}(t)$, we get for any $t\in (T_0,T_1)$ that
\[
\begin{aligned}
\abs{\Ical_2(t)}&\leq \iint_{(x,y)\in B_{\frac{R+r}{2}}\times {(\R^N \setminus B_R)}} \frac{ \left| J_p(u_h(x,t)-u_h(y,t))-J_p(u(x,t)-u(y,t))\right|}{\abs{x-y}^{N+sp} }\, \frac{\abs{\delta_hu(x,t)}^q}{|h|^{1+\vartheta q}} \eta(x)^2 \,\dd y \dd x \\
& \leq {C(N,s,p,h_0)} \int_{B_\frac{R+r}{2}} \frac{\abs{\delta_h u}^q}{\abs{h}^{1+ \vartheta q}} \eta^2 \dd x.
\end{aligned}
\]
Multiplying by $\tau$ and integrating the above estimate from $T_0$ to $T_1$, we arrive at
\begin{equation}\label{estI2}
\begin{aligned}
\abs{\widetilde{\Ical}_2} &\leq \int_{T_0}^{T_1} \abs{\Ical_2} \tau \dd t 
 \leq C  \left(\int_{T_0}^{T_1} \left(  \int_{B_R} \frac{\abs{\delta_h u}^q}{\abs{h}^{1+ \vartheta q}} \eta^2 \dd x \right) \tau \dd t \right) 
\leq C \int_{T_0}^{T_1} \int_{B_R} \frac{\abs{\delta_h u}^q}{\abs{h}^{1+ \vartheta q}} \dd x \dd t,
\end{aligned}
\end{equation}
for some positive constant $C=C(N,s,p,h_0)$. Here we have used that $0\leq \tau\leq 1$ {on $\mathbb{R}$.
Similarly,
\begin{equation}\label{estI3}
\begin{split}
\abs{\widetilde{\Ical}_3} &\leq C \int_{T_0}^{T_1} \int_{B_R} \frac{\abs{\delta_h u}^q}{\abs{h}^{1+ \vartheta q}} \dd x \dd t,
\end{split}
\end{equation}
for some positive constant $C=C(N,s,p,h_0)$.
}\\
\textbf{Estimate of $\Ical_4$ and $\Ical_5$:} Using the properties of $\eta$ and $\tau$ we have
\begin{equation}\label{estI4}
 \abs{\Ical_4} = \frac{1}{q +1} \left|  \int_{T_0}^{T_1} \int_{B_2} \frac{|\delta_h u|^{q+1}}{|h|^{1+\vartheta q}} \eta^2 \tau^\prime \dd x \dd t \right| \leq \frac{C}{\rho} \int_{T_0}^{T_1} \int_{B_{\frac{R+r}{2}}}\frac{|\delta_h u|^{q+1}}{|h|^{1+\vartheta q}} \dd x \dd t,
\end{equation}
for some positive constant $C=C(N,p,q)$.
By the $L^\infty$-bound on $f$ together with the properties of $\eta$ and $\tau$, we also obtain
\begin{equation}\label{eq:estI5}
\begin{aligned}
    \abs{\Ical_5}=& \left| \int_{T_0}^{T_1}\int_{B_2} \delta_h f \frac{J_{q+1}(\delta_h u)}{\abs{h}^{1+ \vartheta q}} \eta^2 \tau \dd x \dd t  \right| \leq  \int_{T_0}^{T_1}\int_{B_2} \abs{\delta_h f} \frac{\abs{J_{q+1}(\delta_h u)}}{\abs{h}^{1+ \vartheta q}} \eta^2 \tau \dd x \dd t \\
    & \leq 
 \normb{\delta_h f \eta}_{L^\infty(B_2\times (T_0,T_1))}\int_{T_0}^{T_1}\int_{B_2} \frac{\abs{\delta_h u}^{q}}{\abs{h}^{1+ \vartheta q}} \eta \dd x \dd t \\
 & \leq 2\int_{T_0}^{T_1}\int_{B_R} \frac{\abs{\delta_h u}^{q}}{\abs{h}^{1+ \vartheta q}} \dd x \dd t.
    \end{aligned}
\end{equation}
Using \eqref{estI2}, \eqref{estI3}, \eqref{estI4} and \eqref{eq:estI5} in \eqref{eq:sobolev-estimate1}, we arrive at
\begin{equation}\label{eq:sobolev-estimate2}
\begin{aligned}
\int_{T_0 +\rho}^{T_1} &\left[ \frac{\abs{\delta_h u}^{\frac{q-1}{2}}\delta_h u}{\abs{h}^{\frac{1+\vartheta q}{2}}} \eta \right]_{W^{\sigma,2}(B_R)}^2 \dd t  + \frac{c}{q+1}\int_{B_R}\frac{|\delta_h u(x,T_1)|^{q+1}}{|h|^{1+\vartheta q}}\, \eta^2 \dd x \\
&\leq C\left( \int_{T_0}^{T_1}\int_{B_R} \frac{\abs{\delta_h u(x,t)}^{q +1}}{\abs{h}^{1+ \vartheta q}} \tau \dd x \dd t + \int_{T_0}^{T_1} \int_{B_R} \frac{\abs{\delta_h u(x,t)}^{p+q -1}}{\abs{h}^{1+ \vartheta q}} \tau \dd x 
 \dd t\right) \\
 &  + C\left( \frac{1}{\rho} \int_{T_0}^{T_1}\int_{B_R} \frac{\abs{\delta_h u(x,t)}^{q +1}}{\abs{h}^{1+ \vartheta q}} \dd x \dd t + \int_{T_0}^{T_1}\int_{B_R} \frac{\abs{\delta_h u(x,t)}^{q }}{\abs{h}^{1+ \vartheta q}}  \dd x \dd t  \right) \\
 & \leq \frac{C}{\min\{\rho,1\}} \int_{T_0}^{T_1}\int_{B_R} \left(\frac{\abs{\delta_h u(x,t)}^{q}}{\abs{h}^{1+ \vartheta q}} + \frac{\abs{\delta_h u(x,t)}^{q +1}}{\abs{h}^{1+ \vartheta q}} +\frac{\abs{\delta_h u(x,t)}^{p +q - 1}}{\abs{h}^{1+ \vartheta q}}\right) \dd x \dd t \\
 &\leq \frac{C}{{\min\{\rho,1\}}} \int_{T_0}^{T_1}\int_{B_R} \frac{\abs{\delta_h u(x,t)}^{q}}{\abs{h}^{1+ \vartheta q}} \dd x \dd t,
 \end{aligned}
\end{equation}
where $C=C(N,s,p,q,h_0)>0$ is a positive constant. Here we have used that $|\tau|\leq 1$ and that $|\delta_h u(x,t)|\leq 2$ for $(x,t)\in B_R\times (-a,0]\subset B_1\times (-a,0]$. Next, we will find a lower bound of the first integral of the left hand side of \eqref{eq:sobolev-estimate2}, that is
\begin{equation*}
\begin{aligned}
\int_{T_0 +\rho}^{T_1} &\left[ \frac{\abs{\delta_h u}^{\frac{q-1}{2}}\delta_h u}{\abs{h}^{\frac{1+\vartheta q}{2}}} \eta \right]_{W^{\sigma,2}(B_R)}^2 \dd t.
\end{aligned}
\end{equation*}
Indeed, by following the same reasoning as in the proof of \cite[Step $4$, Proposition 3.1]{GL}, we obtain for any $0< \abs{\xi}, \abs{h} < h_0$ that 
\begin{equation}\label{fest1}
\begin{split}
 \int_{B_r} \left| \frac{\delta_\xi \delta_h u}{\abs{\xi}^{\frac{2\sigma}{q+1}} \abs{h}^{\frac{1+\vartheta q}{q +1}} } \right|^{q +1} \dd x &\leq  C \left[  \frac{\abs{\delta_h u}^{\frac{q-1}{2}}\delta_h u}{\abs{h}^{\frac{1+\vartheta q}{2}}} \eta \right]_{W^{\sigma , 2}(B_R)}^2 + C \int_{B_R} \frac{\abs{\delta_h u}^{q +1}}{\abs{h}^{1+ \vartheta q}} \dd x ,
\end{split}
\end{equation}
where $C= C(N,s,q,\sigma,h_0)>0 $ is a positive constant. We take $\xi=h$ in \eqref{fest1}, take supremum over $0< \abs{h} < h_0$ and integrate over the time interval $(T_0+\rho,T_1)$ to arrive at
\begin{equation}\label{eq:secon-order-difference}
\begin{aligned}
\int_{T_0+\rho}^{T_1} \sup_{0<\abs{h}<h_0 }\int_{B_r} \left| \frac{\delta_h^2 u}{\abs{h}^{\frac{2\sigma}{q+1}} \abs{h}^{\frac{1+\vartheta q}{q +1}} } \right|^{q +1}  \dd x \dd t  &\leq C \int_{T_0+ \rho }^{T_1}  \sup_{0<\abs{h}<h_0 }\left[  \frac{\abs{\delta_h u}^{\frac{q-1}{2}}\delta_h u}{\abs{h}^{\frac{1+\vartheta q}{2}}} \eta \right]_{W^{\sigma , 2}(B_R)}^2 \dd t \\
&\quad + C \int_{T_0+ \rho}^{T_1}\sup_{0<\abs{h}<h_0 } \int_{B_R} \frac{\abs{\delta_h u}^{q +1}}{\abs{h}^{1+ \vartheta q}} \dd x \dd t,
\end{aligned}
\end{equation}
where $ C=C(N,s,q,\sigma,h_0)>0 $  is a positive constant. By using \eqref{eq:sobolev-estimate2} in \eqref{eq:secon-order-difference} we obtain

\begin{equation}\label{fest}
\begin{aligned}
\int_{T_0+\rho}^{T_1} & \sup_{0< \abs{h}<h_0 }\int_{B_r} \left| \frac{\delta_h^2 u}{\abs{h}^{\frac{2\sigma}{q+1}+ \frac{1+\vartheta q}{q +1}} } \right|^{q +1} \dd x \dd t + \sup_{0< \abs{h} <h_0} \int_{B_R} \frac{\abs{\delta_h u(x,T_1)}^{q + 1}}{\abs{h}^{1+ \vartheta q}} \eta^2 \dd x \\
 &\leq \frac{C}{\min\{\rho,1\}}  \int_{T_0}^{T_1}  \sup_{0< \abs{h}<h_0} \int_{B_R}  \frac{\abs{\delta_h u(x)}^{q}}{\abs{h}^{1+ \vartheta q}} \dd x \dd t  + C \int_{T_0+ \rho}^{T_1}\sup_{0<\abs{h}<h_0 } \int_{B_R} \frac{\abs{\delta_h u}^{q +1}}{\abs{h}^{1+ \vartheta q}} \dd x \dd t  \\
&  \leq \frac{C}{\min\{\rho,1\}}  \int_{T_0}^{T_1}  \sup_{0< \abs{h}<h_0} \int_{B_R}  \frac{\abs{\delta_h u(x)}^{q}}{\abs{h}^{1+ \vartheta q}} \dd x \dd t ,\normalcolor
\end{aligned}
\end{equation}
where $C$ depends on $N,s,p,q,\sigma, \text{ and }h_0$. In the last inequality, we have again used the fact that $\abs{\delta_h u(x,t)} \leq 2$ for $x \in B_R$ and $t \in (-a,0]$.

Recalling that $\vartheta= \alpha - \frac{1}{q}$ and $2 \sigma = sp-\gamma (p-2)$, we obtain from \eqref{fest} that
\begin{equation}\label{eq:improved-besov-T1new}
\begin{aligned}
\int_{T_0+ \rho}^{T_1} \sup_{0< \abs{h}<h_0} \int_{B_r} & \left| \frac{\delta_h^2 u}{\abs{h}^{\frac{sp-\gamma(p-2) + \alpha q}{q +1}}} \right|^{q + 1} \dd x \dd t + \sup_{0< \abs{h} <h_0} \int_{B_r} \frac{\abs{\delta_h u(x,T_1)}^{q + 1}}{\abs{h}^{\alpha q}}  \dd x\\
&\leq \frac{C}{\min\{\rho,1\}} \int_{T_0}^{T_1} \sup_{0<\abs{h}<h_0} \int_{B_R} \frac{\abs{\delta_h u}^q}{\abs{h}^{\alpha q}} \dd x \dd t,
\end{aligned}
\end{equation}
for some positive constant $C=C(N,s,p,q,h_0,\gamma)$.

Since $1<q<\infty$ and $|u|\leq 1$ in $B_1\times(-a,0]$, we may for $\alpha \in (0,1)$ use the second estimate of \cite[Lemma 2.6]{BrLiSc} to estimate the first order difference quotient in the right hand side of \eqref{eq:improved-besov-T1new} by a second order difference quotient. Then \eqref{eq:improved-besov-T1new} transforms into the
inequality below upon recalling the relations between $R, r$ and $h_0$:
\begin{equation}\label{eq:improved-besov-T1}
\begin{split}
\int_{T_0+ \rho}^{T_1} \sup_{0< \abs{h}<h_0} \int_{B_{R-4h_0}} & \left| \frac{\delta_h^2 u}{\abs{h}^{\frac{sp-\gamma(p-2) + \alpha q}{q +1}}} \right|^{q + 1} \dd x \dd t + \sup_{0< \abs{h} <h_0} \int_{B_{R-4h_0}} \frac{\abs{\delta_h u(x,T_1)}^{q + 1}}{\abs{h}^{\alpha q}}  \dd x\\
&\leq\frac{C}{\min\lbrace\rho, 1 \rbrace} \int_{T_0}^{T_1}\left( 1+ \sup_{0<\abs{h}<h_0} \left\| \frac{\delta_h^2 u}{\abs{h}^\alpha} \right\|_{L^q(B_{R+h_0})}^q \right) \dd t,
\end{split}
\end{equation}
for some positive constant $C=C(N,s,p,q,h_0,\gamma)$. If $\alpha=0$, we may directly use that $|u|\leq 1$ \text{ in } $B_1\times(-a,0]$ together with \eqref{alpha0} to arrive at \eqref{eq:improved-besov-T1}.

\textbf{Step 2: Conclusion for $T_1=0$.}
In this case, the previous proof does not directly work, since it relies on Lemma \ref{tst}, which requires $T_1<0$. However, the constant $C$ in \eqref{eq:improved-besov-T1} does not depend on $T_1$, we can therefore use a limiting argument. By assumption, we have for some $\alpha\in[0,1)$, $1<q<\infty$ and $0<h_0<\frac{1}{10}$, that
$$
\int_{T_0}^{0}\sup_{0<{|h|< h_0}}\left\|\frac{\delta_h ^2 u}{|h|^\alpha}\right\|^q_{L^q(B_{R+h_0})}<\infty
$$
for $4h_0<R\leq 1-5h_0$ and $-a<T_0<0$. For fixed $\rho$ and $T_0$, \eqref{eq:improved-besov-T1} implies that for every $T_1<0$ such that $\rho+T_0<T_1$, there holds
\begin{equation}\label{nlim}
\begin{aligned}
\int_{T_0+ \rho}^{T_1} \sup_{0< \abs{h}<h_0} \int_{B_{R-4h_0}} & \left| \frac{\delta_h^2 u}{\abs{h}^{\frac{sp-\gamma(p-2) + \alpha q}{q +1}}} \right|^{q + 1} \dd x \dd t + \sup_{0< \abs{h} <h_0} \int_{B_{R-4h_0}} \frac{\abs{\delta_h u(x,T_1)}^{q + 1}}{\abs{h}^{\alpha q}}  \dd x\\
&\leq{\frac{C}{\min\lbrace\rho, 1 \rbrace} \int_{T_0}^{T_1}\left( 1+ \sup_{0<\abs{h}<h_0} \left\| \frac{\delta_h^2 u}{\abs{h}^\alpha} \right\|_{L^q(B_{R+h_0})}^q \right) \dd t},
\end{aligned}
\end{equation}
for some positive constant $C=C(N,s,p,q,h_0,\gamma)$.
By the monotone convergence theorem,
\begin{equation}\label{lim1}
\begin{split}
\lim_{T\to 0^{-}} \int_{T_0+ \rho}^{T} \sup_{0< \abs{h}<h_0} \int_{B_{R-4h_0}} \left| \frac{\delta_h^2 u}{\abs{h}^{\frac{sp-\gamma(p-2) + \alpha q}{q +1}}} \right|^{q + 1} \dd x \dd t 
&=\int_{T_0+ \rho}^{0} \sup_{0< \abs{h}<h_0} \int_{B_{R-4h_0}} \left| \frac{\delta_h^2 u}{\abs{h}^{\frac{sp-\gamma(p-2) + \alpha q}{q +1}}} \right|^{q + 1} \dd x \dd t.
\end{split}
\end{equation}
In addition, by the definition of local weak solution
$$
t\mapsto \frac{\delta_h u(\cdot,t)}{|h|^\frac{\alpha q}{q+1}}
$$
is  locally a  continuous function on $(-2^{sp}a,0]$ with values in $L^2(B_r)$ for every fixed $0<|h|<h_0$. Therefore, 
$$
\lim_{T\to 0^{-}}\left\|\frac{\delta_h u(\cdot,T)}{|h|^\frac{\alpha q}{q+1}}-\frac{\delta_h u(\cdot,0)}{|h|^\frac{\alpha q}{q+1}} \right\|_{L^2(B_{R-4h_0})}=0.
$$
Since $q\geq 1$, this in turn implies that{\footnote{We use the following standard fact: if $\{f_n\}_{n\in\mathbb{N}}$ converges to $f$ in $L^m(E)$, then
$$
\lim_{n\to\infty}\inf\|f_n\|_{L^k(E)}\geq \|f\|_{L^k(E)}
$$
for any  $k \geq m$.}}
\begin{equation}\label{lim2}
\lim\inf_{T\to 0^{-}}\Big\|\frac{\delta_h u(\cdot,T)}{|h|^\frac{\alpha q}{q+1}}\Big\|_{L^{q+1}(B_{R-4h_0})}\geq \Big\|\frac{\delta_h u(\cdot,0)}{|h|^\frac{\alpha q}{q+1}}\Big\|_{L^{q+1}(B_{R-4h_0})}
\end{equation}
for every $0<|h|<h_0$. Combining \eqref{lim1} and \eqref{lim2} with \eqref{nlim}, gives  that  estimate \eqref{nlim} holds also for $T_1=0$. Finally, by taking the supremum over $T_1\in [T_0+\rho,0]$ we obtain the desired estimate \eqref{eq:improved-besov}.
\end{proof}

\section{Spatial regularity}
Using Proposition \ref{prop:Improved-Besov}, we establish the following spatial regularity results.

\begin{proposition}\label{prop:improve2}
    Assume that $ a\geq a_0>0$, $ 0<\kappa<1$, $1<p<2$, $0<s<1$, and $\gamma \in [0,1)$. Suppose $u$ is a local weak solution of 
\[
\partial_t u {+} (-\Delta_p)^s u= f \qquad \mbox{ in }{B_2\times (-2^{sp}a,0]}
\] 
{as in Definition \ref{subsupsolution}},  where 
    \[
    \norm{f}_{L^\infty (B_1\times (-a,0]) }\leq 1, \quad 
    \norm{u}_{L^\infty(B_1\times (-a,0])}\leq 1,\]
    \[
    \sup_{-a<t\leq 0} \mathrm{Tail}_{{p-1,s\,p}}(u(\cdot,t);0,1) \leq 1,\quad \text{and} \quad \sup_{-a< t \leq 0}[u(\cdot,t)]_{C^\gamma(B_1)} \leq 1,
    \]
    for some $\gamma \in [0,1)$. Let $\tau = \min \lbrace sp-\gamma(p-2) , 1\rbrace $.  Then  for any $\epsilon \in (0, \tau)$ we have 
    $$\sup_{-a(1-\kappa) \leq t \leq 0}[u(\cdot, t)]_{C^{\tau -\epsilon} (B_{\frac{1}{2}})} \leq C,$$
    for some positive constant $ C=C(N,s,p,\epsilon,\gamma, \kappa a_0)$.
\end{proposition}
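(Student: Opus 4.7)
The proposition is proved by a finite Moser-type iteration of Proposition \ref{prop:Improved-Besov}, seeded by the trivial Besov bound coming from $\|u\|_{L^\infty}\leq 1$, and closed out by the Morrey-type embedding Theorem \ref{thm:Morrey-type-embedding}. Each application of Proposition \ref{prop:Improved-Besov} trades one unit of integrability for an additional $\mu:=sp-\gamma(p-2)$ units of Besov smoothness in the averaged sense; the iteration is stopped at the first step where the accumulated Nikol'ski\u{i} regularity supports an embedding into $C^{\tau-\epsilon}$.

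\textbf{Setup and iteration.} Because $\absb{\delta_h^2 u(x,t)}\leq 4$ on $B_{1-2h_0}\times(-a,0]$, hypothesis \eqref{h1} of Proposition \ref{prop:Improved-Besov} is satisfied for $\alpha_0=0$ and any $q_0\geq 1$. Setting $q_n:=q_0+n$ and
\[
\alpha_n:=\frac{\mu+\alpha_{n-1}q_{n-1}}{q_n},
\]
one finds by telescoping that $\alpha_n q_n=n\mu$, so $\alpha_n=\tfrac{n\mu}{q_0+n}\nearrow \mu$; hence $\alpha_n<1$ throughout when $\mu\leq 1$, and for $n<q_0/(\mu-1)$ when $\mu>1$. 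I choose a stopping index $N_\star$ so that $\alpha_{N_\star}q_{N_\star}>N$ and $\alpha_{N_\star}-\tfrac{N}{q_{N_\star}+1}>\tau-\epsilon$. When $\mu\leq 1$ this requires only $N_\star$ large enough; when $\mu>1$ one picks $N_\star=\theta q_0/(\mu-1)$ with $\theta\in(0,1)$ close to $1$ and $q_0$ large, so that $\alpha_{N_\star}=\theta\mu/(\mu-1+\theta)$ is just below $1$ and $N/q_{N_\star}$ is tiny. Taking $h_0\leq 1/(20N_\star)$ ensures the cumulative radial loss $5N_\star h_0$ leaves the final ball containing $B_{1/2}$, and splitting the total time loss $\kappa a$ into $N_\star$ equal pieces $\rho=\kappa a/N_\star\geq \kappa a_0/N_\star$ keeps each step's constant $1/\min\{\rho,1\}$ under control. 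Applying Proposition \ref{prop:Improved-Besov} at each level $n=1,\dots,N_\star$ then yields, inductively,
\[
\int_{-a+n\rho}^{0}\sup_{0<\abs{h}<h_0}\normBB{\frac{\delta_h^2 u}{\abs{h}^{\alpha_n}}}_{L^{q_n}(B_{R_n+h_0})}^{q_n}\dd t+\sup_{-a+n\rho<t\leq 0}\sup_{0<\abs{h}<h_0}\int_{B_{R_n}}\frac{\abs{\delta_h u(\cdot,t)}^{q_n+1}}{\abs{h}^{\alpha_n q_n}}\dd x\leq C_n,
\]
where $R_n=R_{n-1}-5h_0$ and $C_n$ depends only on $N,s,p,q_n,h_0,\gamma,\kappa a_0$.

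\textbf{Closing via Morrey.} After $N_\star$ iterations the radii have shrunk into $B_{1/2+4h_0}$, the time interval is $[-a(1-\kappa),0]$, and the output of the last step is
\[
\sup_{-a(1-\kappa)<t\leq 0}\sup_{0<\abs{h}<h_0}\int_{B_{1/2+4h_0}}\frac{\abs{\delta_h u(\cdot,t)}^{q_{N_\star}+1}}{\abs{h}^{\alpha_{N_\star}q_{N_\star}}}\dd x\leq C.
\]
Multiplying $u(\cdot,t)$ by a cutoff $\chi\in C_c^\infty(B_{3/4})$ with $\chi\equiv 1$ on $B_{1/2+4h_0}$ (and using $\|u\|_{L^\infty}\leq 1$ to control the cross terms) produces a uniform bound on $\chi u(\cdot,t)$ in $\mathcal{N}_\infty^{\alpha_{N_\star},q_{N_\star}+1}(\mathbb{R}^N)$. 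Since $\alpha_{N_\star}(q_{N_\star}+1)>N$ by construction, Theorem \ref{thm:Morrey-type-embedding} delivers $[u(\cdot,t)]_{C^{\tau-\epsilon}(B_{1/2})}\leq C(N,s,p,\epsilon,\gamma,\kappa a_0)$ uniformly in $t$, which is the desired conclusion.

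\textbf{Main difficulty.} The delicate point is the regime $\mu>1$: Proposition \ref{prop:Improved-Besov} forbids $\alpha_n\geq 1$, so the Besov iteration has a hard ceiling and cannot by itself reach the sharp target $\tau=1$. Approaching the H\"older exponent $1-\epsilon$ therefore demands a joint tuning of $q_0$ (large, to make the Morrey correction $N/q_{N_\star}$ negligible) and $N_\star$ (proportional to $q_0$, to drive $\alpha_{N_\star}$ just below $1$), with both bounded in terms of $\epsilon$ alone. This same tuning dictates the choices of $h_0$ and $\rho$, and is responsible for the dependence of the final constant on the entire list $(N,s,p,\epsilon,\gamma,\kappa a_0)$.
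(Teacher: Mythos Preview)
Your approach is correct and follows the same overall strategy as the paper: iterate Proposition~\ref{prop:Improved-Besov} finitely many times to accumulate Besov regularity, extract a pointwise-in-time Nikol'ski\u{\i} bound from the first-order difference term on the left of \eqref{eq:improved-besov}, and close with the Morrey-type embedding of Theorem~\ref{thm:Morrey-type-embedding}. The difference lies in the iteration scheme. You increment $q$ by one at each step, so that $\alpha_n q_n=n\mu$ and both integrability and smoothness climb together; the paper instead keeps $q=q_0$ \emph{fixed} throughout, using $|\delta_h u|\leq 2$ (Young's inequality) to reduce the $L^{q_0+1}$ output back to $L^{q_0}$, so only the sequence $\alpha_{i+1}=(\mu+\alpha_i q_0)/(q_0+1)$ evolves. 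This makes the paper's handling of the regime $\mu>1$ simpler than yours: since $\tau-\tfrac{\epsilon}{4}<1$ and the monotone sequence $\alpha_i\to\mu$ must cross $\tau-\tfrac{\epsilon}{4}$ at some $i_\infty$ with $\alpha_{i_\infty-1}<\tau-\tfrac{\epsilon}{4}<1$, one simply caps the output via $|h|^{\alpha_{i_\infty}}\leq|h|^{\tau-\epsilon/4}$ and applies the proposition once more with this admissible input---no joint tuning of $q_0$ and $N_\star$ is needed. Your scheme works too, at the cost of the extra calibration you describe. Two minor bookkeeping points: the Nikol'ski\u{\i} exponent produced at the final step is $\alpha_{N_\star}q_{N_\star}/(q_{N_\star}+1)$ rather than $\alpha_{N_\star}$ (harmless since $q_{N_\star}$ is large), and the first-order term in your displayed inductive estimate should carry the exponent $\alpha_{n-1}q_{n-1}$, not $\alpha_n q_n$; also be sure your cutoff $\chi$ is supported \emph{inside} the ball on which you control $\delta_h u$, not merely equal to $1$ there.
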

\begin{proof}
    Take $0<\epsilon < \tau$ and choose $q_0=q_0(N,\e)$ so that
    $$\tau -\frac{\epsilon}{2} - \frac{N}{q_0} > \tau - \epsilon > 0 \quad \text{and} \quad \frac{q_0}{q_0+1}(\tau-\frac{\epsilon}{4}) \geq \tau -\frac{\epsilon}{2} .$$
    Then consider the sequence
    \[
    \alpha_0 = 0 , \quad \alpha_{i+1}:= \frac{sp-\gamma(p-2) + \alpha_{i} q_0}{q_0 +1}, \quad i= 0, \ldots , i_\infty,
    \]
where we choose $ i_\infty=i_\infty(N,p,s,\e,\gamma)\in\mathbb{N}$ such that 
    $$\alpha_{i_\infty -1} < \tau -\frac{\epsilon}{4}\leq \alpha_{i_\infty} .$$
    This is possible since the sequence of $\alpha_i$ is increasing and 
    $$\lim_{i \to \infty} \alpha_i = sp- \gamma (p-2).$$ 
    Define also
    \[
   h_0= \frac{1}{40 i_\infty},  \quad R_i = \frac{7}{8} - (5i+1)h_0, \quad \text{for} \; i= 0, \ldots , i_\infty .
    \]
    Note that
    \[
    R_0 + h_0 = \frac{7}{8} \quad \text{and} \quad R_{i_\infty -1} - 4h_0 = \frac{3}{4}.
    \]
 Now we apply Proposition \ref{prop:Improved-Besov} (ignoring the second term in the left-hand side of \eqref{eq:improved-besov}) and apply Young's inequality with
    $$q=q_0, \quad \rho = \frac{\kappa a}{2(i_\infty +1)},\quad  T_0 = -a+ (i+1)\rho,\quad R=R_i.$$
 We observe that $0<\rho<-T_0$ for the above choice of $T_0$, since $\kappa\in(0,1)$. Further, $R_i-4h_0 = R_{i+1} +h_0$. Therefore, we obtain the following iterative scheme of inequalities:

    $\bullet$ For $i=0$, we get
    \begin{equation}\label{i0}
    \int_{-a + 2 \rho}^{0} \sup_{0<\abs{h}<h_0} \left\| \frac{\delta_h^2 u}{\abs{h}^{\frac{sp-\gamma(p-2)  }{q_0}}}\right\|_{L^{q_0}(B_{R_1 + h_0})}^{q_0}  \dd t 
    \leq \frac{C}{\min\lbrace\rho, 1 \rbrace} \int_{-a+\rho}^{0} \left( \sup_{0<\abs{h}< h_0}  \norm{\delta_h^2 u}_{L^{q_0}\left(B_{7/8}\right)}^{q_0} +1 \right) \dd t.
    \end{equation}
Notice that $\frac{sp-\gamma(p-2)}{q_0+1} = \alpha_1$.

    $\bullet$ For $i=1, \ldots i_{\infty}-2$, we have
    \begin{equation}\label{i1}
    \int_{-a + (i+2)\rho}^{0} \sup_{0<\abs{h}< h_0} \left\| \frac{\delta_h^2 u}{\abs{h}^{\alpha_{i+1}}}\right\|_{L^{q_{0}}(B_{R_{i+1} + h_0})}^{q_{0}} \dd t 
    \leq \frac{C}{\min\lbrace\rho, 1 \rbrace} \int_{-a + (i+1)\rho}^{0} \left( \sup_{0<\abs{h}< h_0} \left\| \frac{\delta_h^2 u}{\abs{h}^{\alpha_{i}}}\right\|_{L^{q_{0}}(B_{R_{i} + h_0})}^{q_{0}} +1\right) \dd t.
    \end{equation}

    $\bullet$ For $i= i_{\infty}-1$, we have
    \begin{equation}\label{i2}
      \int_{-a + \frac{\kappa a}{2}}^{0} \sup_{0<\abs{h}< h_0} \left\| \frac{\delta_h^2 u}{\abs{h}^{\alpha_{i_{\infty}}}}\right\|_{L^{q_{0}}(B_{3/4})}^{q_{0}} \dd t \leq \frac{C}{\min\lbrace\rho, 1 \rbrace} \int_{-a + i_\infty \rho}^{0} \left( \sup_{0<\abs{h}< h_0} \left\| \frac{\delta_h^2 u}{\abs{h}^{\alpha_{i_{\infty}-1}}}\right\|_{L^{q_{0}}(B_{R_{i_{\infty}-1} + h_0})}^{q_{0}} +1\right)\dd t.
    \end{equation}
    Here $C= C(N,s,p,\epsilon,\gamma,{h_0})$. Also, since $\norm{u}_{L^\infty(B_1{\times(-a,0]})} \leq 1 $, we have
    \begin{equation}\label{usei0}
    \sup_{0< \abs{h}< h_0} \norm{\delta_h^2 u}_{L^{q_0}(B_{7/8})} \leq 3 .
    \end{equation}
    Hence, using \eqref{usei0} in the above iterative scheme of inequalities {\eqref{i0}, \eqref{i1} and \eqref{i2}, we have the following estimate} 
    \begin{equation}\label{eq;:improve-besov-alpha-infty}
         \int_{-a(1- \frac{\kappa}{2}) }^{0} \sup_{0<\abs{h}< h_0} \left\| \frac{\delta_h^2 u}{\abs{h}^{\alpha_{i_{\infty}}}}\right\|_{L^{q_{0}}(B_{3/4})}^{q_{0}} \dd t \leq \frac{C(N,s,p,\epsilon,\gamma,{h_0})}{(\min \{ \kappa a,1\})^{i_\infty}}.
    \end{equation}
    As $\tau -\frac{\epsilon}{4}\leq \alpha_{i_\infty}$, for all $\abs{h} \leq 1$ we have 
    $$\abs{h}^{\alpha_{i_\infty}} \leq \abs{h}^{\tau - \frac{\epsilon}{4}}.$$
    Using this in \eqref{eq;:improve-besov-alpha-infty} we arrive at
    \begin{equation}\label{ib1}
     \int_{-a(1- \frac{\kappa}{2}) }^{0} \sup_{0<\abs{h}< h_0} \left\| \frac{\delta_h^2 u}{\abs{h}^{\tau -\frac{\epsilon}{4}}}\right\|_{L^{q_{0}}(B_{3/4})}^{q_{0}} \dd t \leq \frac{C(N,s,p,\epsilon,\gamma,{h_0})}{(\min \{ \kappa a,1\})^{i_\infty}}.
    \end{equation}
Now we set
    \[
    T_1=0, \quad T_0 = -a(1-\kappa/2), \quad \rho= \frac{\kappa a}{2}, \quad \alpha = \tau - \frac{\epsilon}{4},\quad \text{and}\quad q=q_0.
    \]
    Then,
    \[
    R+h_0=\frac{3}{4}, \quad R- 4h_0 = \frac{3}{4}-5h_0 \geq \frac{5}{8}. 
    \]
    Since $a>0$ and $\kappa\in(0,1)$, we have $0<\rho<T_1-T_0$. We now apply Proposition \ref{prop:Improved-Besov} again, taking \eqref{ib1} into account (ignoring the first term in the left-hand side of \eqref{eq:improved-besov} this time). This yields
    \begin{equation}\label{eq:improved-besov-sup-time}
    \begin{aligned}
        \sup_{-a(1-\kappa)\leq t\leq 0} \, \sup_{0< \abs{h} <h_0} \left\|  \frac{\delta_h u(\frarg,t)}{\abs{h}^\frac{(\tau - \epsilon/4) q_{0}}{q_{0} +1}} \right\|_{L^{q_{0}}(B_{5/8})}^{q_{0}} 
        &\leq \frac{C}{\min\{\kappa a,1\}} \int_{-a(1-\frac{\kappa a}{2})}^{0} \left( \sup_{0<\abs{h}< h_0} \left\| \frac{\delta_h^2 u(\frarg , t)}{\abs{h}^{\tau - \epsilon/4}}\right\|_{L^{q_{{0}}}(B_{3/4})}^{q_{0}} +1 \right) \dd t \\
        &\leq \frac{C}{(\min \{\kappa a,1\})^{i_\infty +1}},
        \end{aligned}
    \end{equation}
for some positive constant $C=C(N,s,p,\epsilon,\gamma,h_0)$. Here we have again used Young's inequality to reduce the power to $q_0$.
    Since $ \frac{q_0}{q_0+1} \geq \frac{\tau -\frac{\epsilon}{2}}{\tau - \frac{\epsilon}{4}}$, we have
    $$\frac{(\tau -\frac{\epsilon}{4}) q_{0}}{ q_{0} +1 } \geq \tau -\frac{\epsilon}{2}.$$
    By using this in \eqref{eq:improved-besov-sup-time} we arrive at
    \begin{equation}\label{eq:improved-besov-tau}
        \begin{aligned}
        \sup_{-a(1-\kappa) \leq t\leq 0} &\, \sup_{0< \abs{h} <h_0} \left\|  \frac{\delta_h u(\frarg,t)}{\abs{h}^{\tau - \frac{\epsilon}{2}}} \right\|_{L^{q_{0}}(B_{5/8})} \\
        &\leq \sup_{-a(1-\kappa)\leq t\leq 0} \, \sup_{0< \abs{h} <h_0} \left\|  \frac{\delta_h u(\frarg,t)}{\abs{h}^\frac{(\tau - \frac{\epsilon}{4}) q_{0}}{q_{0} +1}} \right\|_{L^{q_{0}}(B_{5/8})}  \leq \frac{C(N,\epsilon,p,s,\gamma,h_0)}{(\min\{\kappa a,1\})^{i_\infty +1}}.
        \end{aligned}
    \end{equation}
    Now we take $\chi \in C^\infty_c(B_{9/16})$ such that
    \[
    0\leq \chi \leq 1{\text{ in }B_\frac{9}{16}},\quad \chi {\equiv} 1 \; \text{in} \; B_{1/2}, \quad \abs{\nabla \chi } \leq C{\text{ in }B_\frac{9}{16}}, 
    \]
  for some positive constant $C$. 
    In particular, for any $h$ with $0<\abs{h}< h_0$, we have
    $$ \frac{\abs{\delta_h \chi}}{\abs{h}^{\tau - \frac{\epsilon}{2}}} \leq \frac{\abs{\delta_h \chi}}{\abs{h}} \leq C.$$
    Also, recall that 
    $$\delta_h( \chi u) = \chi_h \delta_h u + u \delta_h \chi .$$
    Hence, using the above properties of $\chi$, 
 for every $t \in [-a(1-\kappa),0],$  we have
    \[
    \begin{aligned}
    &[u\chi]_{\Ncal^{\tau -\epsilon/2,q_{0}}_{\infty}(\R^N)}= \sup_{ \abs{h}>0} \left\|  \frac{\delta_h (u\chi)}{\abs{h}^{\tau - \frac{\epsilon}{2}}} \right\|_{L^{q_{0}}(\R^N)}  \\
    &\leq \sup_{ 0< \abs{h}< h_0} \left\|  \frac{\delta_h (u\chi)}{\abs{h}^{\tau - \frac{\epsilon}{2}}} \right\|_{L^{q_{0}}(\R^N)}  + \sup_{ \abs{h}\geq h_0} \left\|  \frac{\delta_h (u\chi)}{\abs{h}^{\tau - \frac{\epsilon}{2}}} \right\|_{L^{q_{0}}(\R^N)}   \\
   &  \leq  \sup_{ 0< \abs{h}< h_0} \left\|  \frac{\chi_h \delta_h u}{\abs{h}^{\tau - \frac{\epsilon}{2}}} \right\|_{L^{q_{0}}(\R^N)}  +   \sup_{ 0< \abs{h}< h_0} \left\|  \frac{u \delta_h \chi}{\abs{h}^{\tau - \frac{\epsilon}{2}}} \right\|_{L^{q_{0}}(\R^N)} \\
   &\quad \qquad + \frac{1}{h_0^{\tau - \frac{\epsilon}{2}}} \sup_{\abs{h} \geq h_0} \norm{\delta_h(u \chi)}_{L^{q_{0}}(\R^N)} \\
   &\leq \sup_{ 0< \abs{h}< h_0} \left\|  \frac{ \delta_h u}{\abs{h}^{\tau - \frac{\epsilon}{2}}} \right\|_{L^{q_{0}}(B_{\frac{9}{16} + h_0})}  + \norm{u}_{L^{q_{0}}(B_{\frac{9}{16} + h_0})} \sup_{0 < \abs{h} < h_0} \left \| \frac{\delta_h \chi}{\abs{h}^{\tau - \frac{\epsilon}{2}}}  \right\|_{L^\infty(B_{\frac{9}{16} +h_0})} \\
   & \qquad + \frac{1}{h_0^{\tau - \frac{\epsilon}{2}}} \sup_{\abs{h} \geq h_0} \left( \norm{(u \chi)_h}_{L^{q_{0}}(B_{\frac{9}{16}}(-h))} + \norm{u \chi}_{L^{q_{0}}(B_{\frac{9}{16}})} \right) \\
   & \leq \sup_{ 0< \abs{h}< h_0} \left\|  \frac{ \delta_h u}{\abs{h}^{\tau - \frac{\epsilon}{2}}} \right\|_{L^{q_{0}}(B_{\frac{5}{8}})}  + C\norm{u}_{L^{q_{0}}(B_1)},
    \end{aligned}
    \]
    {for some positive constant $C=C(N,s,p,\epsilon,\gamma,h_0)$.} Since $h_0$ depends on $s,p,\epsilon,\gamma$ and $\norm{u}_{L^\infty(B_1\times(-a,0])} \leq 1 $, the above estimate combined with \eqref{eq:improved-besov-tau} implies 
    \begin{equation}\label{nl}
        \sup_{a(1-\kappa) \leq t \leq 0} [u\chi(\cdot,t)]_{N^{\tau -\epsilon/2,q_{0}}_{\infty}(\R^N)} \leq \frac{C(N,s,p,\epsilon,\gamma)}{(\min\{\kappa a,1\})^{c(N,s,p,\e,\gamma)}}.
    \end{equation}
    Notice that by the choice of $q_0$, we have
    \[
    \tau - \epsilon < \tau - \frac{\epsilon}{2} - \frac{N}{q_0}. 
    \]    
    Now we apply Theorem \ref{thm:Morrey-type-embedding} with $q=q_{0}$, $\beta= \tau - \epsilon/2$ and $\alpha = \tau - \epsilon$ to obtain 
    \[
    \begin{aligned}\
    &\sup_{-a(1-\kappa)\leq t \leq 0} [u(\cdot,t)]_{C^{\tau - \epsilon}(B_{1/2})} = \sup_{-a(1-\kappa) \leq t \leq 0} [(u \chi)(\frarg ,t))]_{C^{\tau - \epsilon}(B_{1/2})} \leq \sup_{-a(1-\kappa) \leq t \leq 0} [(u\chi)(\cdot ,t)]_{C^{\tau - \epsilon}(\R^N)} \\
    &\leq C \sup_{-a(1-\kappa) \leq t \leq 0} \left( \left( [u\chi(\cdot,t)]_{\Ncal^{\tau - \frac{\epsilon}{2}, q_0}_\infty(\R^N)}\right)^{\frac{(\tau-\epsilon)q_{0}+ N}{(\tau - \epsilon/2)(q_{0})}} \left(\norm{u \chi(\cdot,t)}_{L^{q_{0}}(\R^N)}\right)^{\frac{\frac{(q_{0})\epsilon}{2}-N}{(\tau -\epsilon/2)q_{0}}} \right) \\
    &\leq C \left( \sup_{-a(1-\kappa) \leq t \leq 0} [u\chi(\cdot,t)]_{\Ncal^{\tau - \frac{\epsilon}{2}, q_{0}}_\infty(\R^N)}\right)^{\frac{(\tau-\epsilon)q_{0}+ N}{(\tau - \epsilon/2)(q_{0})}}
    \left(\sup_{-a(1-\kappa) \leq t \leq 0}\norm{u \chi(\cdot,t)}_{L^{q_{0}}(\R^N)}\right)^{\frac{\frac{\epsilon q_0}{2}-N}{(\tau -\epsilon/2)q_{0}}}\\
    & \leq \frac{C}{(\min\{\kappa a,1\})^{c\frac{(\tau-\epsilon)q_{0}+ N}{(\tau - \epsilon/2)q_{0}}}}, 
    \end{aligned}
    \]
     where we to obtain the above estimate, have also used the estimate \eqref{nl} and the fact that $\|u\|_{L^\infty(B_1\times (-a,0])}\leq 1$.  Here $C=C(N,s,p,\epsilon,\gamma)$ and $c=c(N,s,p,\e,\gamma)$ are positive constants. The proof follows by using that $\min\lbrace\kappa a,1\rbrace\geq \min \lbrace \kappa a_0,1\rbrace$ since $a\geq a_0$.
\end{proof}

By yet another iteration process, we arrive at the final almost $sp/(p-1)$-regularity in space.

\begin{theorem}[Almost $sp/(p-1)$-regularity]
\label{teo:localalmost}
Let $1<p<2$ and $0<s<1$. Suppose $u$ is a local weak solution of 
\[
\partial_t u+(-\Delta_p)^s u= f \qquad \mbox{ in }B_2\times (-2^{sp},0]
\] 
 where 
\[
\begin{aligned}
&\|u\|_{L^\infty(B_1\times (-1,0])}\leq 1, \qquad \norm{f}_{L^\infty(B_1 \times (-1,0]))} \leq 1 , \\
&\mbox{ and }\qquad \sup_{t\in (-1,0]}\mathrm{Tail}_{p-1,s\,p}(u(\cdot,t);0,1)^{p-1}\leq 1.
\end{aligned}
\]
Then for any $\e\in(0,\Theta)$, there is { $\sigma(\e,s,p)\in(0,\frac{1}{2}]$,} such that $u(\cdot,t)\in C^{{\Theta}-\e}(B_\sigma)$ for all $t\in (-\sigma^{sp},0]$, where
$$
{\Theta} = \min(sp/(p-1),1).
$$
Moreover, 
$$
\sup_{t\in (-\sigma^{sp},0]}[u(\cdot,t)]_{C^{{\Theta}-\e}(B_\sigma)}\leq C(s,p,\e,N).
$$
\end{theorem}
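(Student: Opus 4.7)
My plan is to iterate Proposition \ref{prop:improve2} finitely many times, each application improving the spatial H\"older exponent. The key dynamical observation is that the exponent update $\Psi:\gamma\mapsto sp-\gamma(p-2)=sp+\gamma(2-p)$ (which is the effective improvement in Proposition \ref{prop:improve2} as long as $\Psi(\gamma)<1$) has unique fixed point $sp/(p-1)$ and, since $0<2-p<1$, is a strict contraction. Hence starting from $\gamma_0=0$ and setting $\gamma_{n+1}=\min\{\Psi(\gamma_n),1\}-\eta_n$ for small losses $\eta_n>0$ with $\sum_n\eta_n<\varepsilon/2$, the sequence $\gamma_n$ is strictly increasing and, for any $\varepsilon\in(0,\Theta)$, reaches a value $\geq\Theta-\varepsilon$ after some finite number $n_\varepsilon=n_\varepsilon(s,p,\varepsilon)$ of steps. (If $\Theta=1$ the iteration caps at $1$ after finitely many rounds.)

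\textbf{Inductive step and rescaling.} A preliminary rescaling $u_0(x,t)=u(r_0 x,r_0^{sp}M_0^{2-p}t)/M_0$, with $M_0\geq 2$ and $r_0$ sufficiently small, reduces to the case where $u_0$ is a local weak solution on $B_2\times(-2^{sp},0]$ satisfying $\|u_0\|_{L^\infty(B_1\times(-1,0])}$, $\|f_0\|_\infty$, $\sup_t\mathrm{Tail}_{p-1,sp}(u_0(\cdot,t);0,1)$ and $\sup_t[u_0(\cdot,t)]_{C^{\gamma_0}(B_1)}$ all $\leq 1$. Assume inductively that such a normalized $u_n$ at exponent $\gamma_n$ exists. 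Applying Proposition \ref{prop:improve2} with $a=a_0=1$, $\kappa=1/2$ and loss $\eta_n$ gives
\[
\sup_{t\in(-1/2,0]}[u_n(\cdot,t)]_{C^{\gamma_{n+1}}(B_{1/2})}\leq C_n,
\]
with $C_n=C_n(N,s,p,\gamma_n,\eta_n)$. I then define
\[
u_{n+1}(x,t)=\frac{1}{M_n}\,u_n(r_n x,\,r_n^{sp}M_n^{2-p}t),\qquad M_n\geq 1,\ r_n\in(0,1/4].
\]
The scaling identities $\partial_t[u_n(r_n\cdot,r_n^{sp}M_n^{2-p}t)]=r_n^{sp}M_n^{2-p}(\partial_t u_n)(r_n\cdot,\cdot)$ and $(-\Delta_p)^s[u_n(r_n\cdot)](x)=r_n^{sp}(-\Delta_p)^s u_n(r_n x)$ show, with a matching choice of temporal factor, that $u_{n+1}$ solves $\partial_t u_{n+1}+(-\Delta_p)^s u_{n+1}=f_{n+1}$ on $B_2\times(-2^{sp},0]$ provided $(2r_n)^{sp}M_n^{2-p}\leq 1/2$, with $\|f_{n+1}\|_\infty\leq r_n^{sp}M_n^{1-p}$, $\|u_{n+1}\|_\infty\leq 1/M_n$ and $[u_{n+1}(\cdot,t)]_{C^{\gamma_{n+1}}(B_1)}\leq r_n^{\gamma_{n+1}}C_n/M_n$. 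A short calculation that splits the tail integral over $B_1\setminus B_{r_n}$ (where $|u_n|\leq 1$) and $\mathbb{R}^N\setminus B_1$ (where the tail hypothesis for $u_n$ is used) yields $\mathrm{Tail}_{p-1,sp}(u_{n+1}(\cdot,t);0,1)^{p-1}\leq C(N,s,p)/M_n^{p-1}$. Choosing first $M_n$ large enough (depending on $C_n,N,s,p$) to normalize the $L^\infty$, tail and $C^{\gamma_{n+1}}$ bounds, and then $r_n$ small enough to normalize $\|f_{n+1}\|_\infty$ and enforce the time-domain condition, one closes the induction.

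\textbf{Conclusion and main obstacle.} After $n_\varepsilon$ iterations one has $\sup_t[u_{n_\varepsilon}(\cdot,t)]_{C^{\Theta-\varepsilon}(B_1)}\leq 1$ on the normalized time interval; unwinding all rescalings produces
\[
\sup_{t\in(-\sigma^{sp},0]}[u(\cdot,t)]_{C^{\Theta-\varepsilon}(B_\sigma)}\leq C(N,s,p,\varepsilon),\qquad \sigma=r_0 r_1\cdots r_{n_\varepsilon-1}\in(0,1/2].
\]
The main obstacle is the bookkeeping across the rescalings: at each step one must pick the parameters $M_n,r_n$ in a compatible order so that all four normalization conditions (not just one) hold simultaneously, and in particular the rescaled time window $(-2^{sp},0]$ is preserved for the next application of Proposition \ref{prop:improve2}. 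Because $n_\varepsilon$ depends only on $s,p,\varepsilon$ (and the iteration is entirely finite), all intermediate constants remain bounded, giving a strictly positive $\sigma$ and a finite $C$ in the final statement.
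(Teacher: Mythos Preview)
Your proposal is correct and follows essentially the same approach as the paper: iterate Proposition \ref{prop:improve2} finitely many times along the sequence $\gamma_{n+1}\approx sp-\gamma_n(p-2)$, rescaling at each step to restore the normalized hypotheses. The only differences are cosmetic: the paper fixes the spatial dilation to $r_n=1/2$ (so $\sigma=2^{-i_\infty}$), takes a constant loss $\eta_n=\varepsilon(p-1)/2$ rather than a summable sequence, and rescales directly from $u$ rather than recursively from $u_n$, which leads it to apply Proposition \ref{prop:improve2} with $a=M_i^{p-2}$ (using the $a_0$-dependence built into that proposition) instead of rescaling time to force $a=1$ as you do.
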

\par
\begin{proof} The idea is to apply Proposition \ref{prop:improve2} iteratively. Take $\e\in(0,\Theta)$ and define
$$
\gamma_0=0,\qquad \gamma_{i+1}=sp-\gamma_i(p-2)-\frac{\e(p-1)}{2}.
$$
Then $\{\gamma_i\}$ is an increasing sequence and $\gamma_i\to sp/(p-1)-\e/2$, as $i\to\infty$.  {It is clear that there is {$i_\infty=i_\infty(s,p,\e)\in\mathbb{N}$} such that $\gamma_{i_\infty}\geq \Theta-\e$ and $\gamma_{i_\infty-1}<1$.}

Define 
$$
v_i(x,t)=\frac{u(2^{-i} x,2^{-isp}{M_i}^{2-p}\,t)}{M_i}
$$
and
\begin{equation}\label{eq:Mi}
\begin{split}
M_i &= 1 + \|u\|_{L^\infty(B_{2^{-i}}\times ({-2^{-i sp}},0])}+ \sup_{t\in (-2^{-isp},0]}\mathrm{Tail}_{p-1,s\,p}(u(\cdot,t);0,2^{-i}) \\
& \qquad +  2^{-isp/(p-1)} \norm{f}_{L^\infty(B_{2^{-i}}\times( -2^{-isp} ,0])}^{\frac{1}{p-1}} + 2^{-i\gamma_i}\sup_{t\in (2^{-i sp},0]}[u(\cdot,t)]_{C^{\gamma_i}(B_{2^{-i}})}\\
&\leq C(N,p,s,\e)\Big(1+\sup_{t\in (-2^{-i sp},0]}[u(\cdot,t)]_{C^{\gamma_i}(B_{2^{-i}})})\Big)\\
&=C(N,p,s,\e)\Big(1+M_{i-1}2^{(i-1)\gamma_i}\sup_{t\in (-M_{i-1}^{p-2}2^{-i sp},0]}[v_{i-1}(\cdot,t)]_{C^{\gamma_i}(B_{\frac12})}\Big)\\
&\leq C(N,p,s,\e)\Big(1+M_{i-1}\sup_{t\in (-M_{i-1}^{p-2}2^{-i sp},0]}[v_{i-1}(\cdot,t)]_{C^{\gamma_i}(B_{\frac12})}\Big).
\end{split}
\end{equation}
Then $v_i$ is a {local weak solution of the equation $v_t+(-\Delta_p)^sv= f_i$} in $B_2\times(-2^{sp} M_i^{p-2},0]$, where

$$f_i(x,t)=2^{-isp} \frac{f(2^{-i}x,2^{-isp}M_i^{2-p}t)}{M_i^{p-1}}.$$
Moreover, if  
$$\sup_{t\in (-2^{-isp},0]}[u(\cdot,t)]_{C^{\gamma_i}(B_{2^{-i}})}<\infty,
$$
then $M_i <\infty$ and $v_i$ and $f_i$ are well defined and satisfy
$$
\|v_i\|_{L^\infty(B_1\times (-M_i^{p-2},0])}\leq 1,\qquad  \sup_{t\in (-M_i^{p-2},0]}\mathrm{Tail}_{p-1,s\,p}(v_i(\cdot,t);0,1)^{p-1}\leq 1,$$
$$ \sup_{t\in (-M_i^{p-2},0]}[v_i(\cdot,t)]_{C^{\gamma_i}(B_{1})}\leq 1,
 \quad \text{and} \quad 
\| f_i\|_{L^\infty(B_1\times (-M_i^{p-2},0])}\leq 1.
$$
Now we apply Proposition \ref{prop:improve2} with  $a=M_i^{p-2}$ and $\kappa = 1-2^{-sp}$ to $v_i$ in the cylinders $B_2\times (-2^{sp}M_i^{p-2},0]$ successively with $\gamma=\gamma_i$ and $\e$ replaced by $\frac{\e(p-1)}{2}$ and obtain for $i=1$ to $i=i_\infty$
\begin{eqnarray}
\sup_{t\in (-2^{-sp},0]}[v_0(\cdot,t)]_{C^{\gamma_1}(B_\frac12)}\leq C(s,p,\e,N), \nonumber \\
 M_1\leq C(s,p,\e,N)\Big(1+ \sup_{t\in (-2^{-sp},0]}[v_0(\cdot,t)]_{C^{\gamma_1}(B_\frac12)}\Big)\leq C(s,p,\e,N),\nonumber \\
 \sup_{t\in (-M_1^{p-2}2^{-sp},0]}[v_1(\cdot,t)]_{C^{\gamma_2}(B_\frac12)}\leq C(s,p,\e,N),\nonumber \\
 M_2\leq C(s,p,\e,N)\Big(1+M_1 \sup_{t\in (-M_1^{p-2}2^{-sp},0]}[v_1(\cdot,t)]_{C^{\gamma_2}(B_\frac12)}\Big)\leq C(s,p,\e,N),\nonumber \\
 \sup_{t\in (-M_{i-1}^{p-2}2^{-sp},0]}[v_{i-1}(\cdot,t)]_{C^{\gamma_{i}}(B_\frac12)}\leq C(s,p,\e,N),\nonumber \\
 M_{i}\leq C(N,p,s,\e)\Big(1+M_{i-1}\sup_{t\in (-M_{i-1}^{p-2}2^{-i sp},0]}[v_{i-1}(\cdot,t)]_{C^{\gamma_i}(B_{\frac12})}\Big)\leq C(s,p,\e,N),\nonumber \\
\cdots\nonumber  \\
 \sup_{t\in (-M_{i_\infty-1}^{p-2}2^{-sp},0]} [v_{i_{\infty}-1}(\cdot,t)]_{C^{\min\big(\gamma_{i_{\infty}},1-\frac{\e(p-1)}{2}\big)}(B_\frac12)}\leq C(s,p,\e,N). \label{eq:lastest}
\end{eqnarray}
Note that at each step above we have used Proposition \ref{prop:improve2} for a time interval with $a=M_i^{p-2}\geq C(s,p,\e,N)=a_0$ and we have also used \eqref{eq:Mi} to estimate $M_i$. If $sp>1$, we only do one iteration and in particular $i_\infty=1$. In particular in that case \eqref{eq:lastest} becomes
\[
\sup_{t \in (-2^{-sp},0]} [v_0(\frarg,t)]_{C^{sp - \frac{\epsilon(p-1)}{2}}}\leq C(s,p,\epsilon,N).
\]
Scaling back to $u$ and using that $\gamma_{i_\infty}\geq \Theta-\e$, we obtain 
\[
\begin{split}
\sup_{t\in (-2^{-(i_\infty)sp},0]}[u(\cdot,t)]_{C^{\Theta-\e}(B_{2^{-i_\infty}})}&=\sup_{t\in (-M_{i_\infty-1}^{p-2}2^{-sp},0]}2^{(i_\infty-1)(\Theta-\e)}M_{i_\infty-1}[v_{i_{\infty-1}}(\cdot,t)]_{C^{\Theta-\e}(B_\frac12)}\\
&\leq C(s,p,\e,N).
\end{split}
\]
This is the desired result with $\sigma = 2^{-i_\infty}$.
\end{proof}

{Now we apply Theorem \ref {teo:localalmost} to prove the following spatial regularity result.}

\begin{theorem}[Spatial almost $C^{sp/(p-1)}$ regularity]
\label{teo:new}
Let $\Omega\subset\mathbb{R}^N$ be a bounded and open set, $I=(t_0,t_1]$, {$1<p<2$} and $0<s<1$. Suppose $u$ is a local weak solution of 
\[
\partial_t u+(-\Delta_p)^s u= f \qquad \mbox{ in }\Omega\times I,
\]
with $f \in L^\infty_\loc(\Omega \times I)$ such that 
$$
\norm{u}_{L^\infty(Q_{2R,(2R)^{s\,p}}(x_0,T_0))}+\sup_{-(2R)^{sp}+T_0<t\leq T_0} \mathrm{Tail}_{p-1,s\,p}(u(\frarg,t);x_0,R)   <\infty,
$$
where $(x_0,T_0)$ are such that 
\[
Q_{2R,(2R)^{s\,p}}(x_0,T_0)\Subset\Omega\times I.
\] 
Then $u\in C^{\Theta-\e}_{x,\rm loc}(\Omega\times I)$ for every {$\e\in(0,\Theta)$}, where $\Theta = \min(sp/(p-1),1)$. 
\par
More precisely, for every $\e\in(0,\Theta)$, $R>0$ and every $(x_0,T_0)$ such that 
\[
Q_{2R,(2R)^{s\,p}}(x_0,T_0)\Subset\Omega\times I,
\] 
there  exist constants  $C=C(N,s,p,\e)>0$ and $ 0<\sigma(\epsilon,s,p)\leq \frac{1}{2}$ such that
\begin{equation}
\label{eq:sigmaest}
\sup_{t\in \left(T_0-(\sigma R)^{s\,p},T_0\right]} [u(\cdot,t)]_{C^{\Theta-\e}(B_{{\frac{\sigma R}{2}}}(x_0))}\leq
{C\Mcal^{1+\frac{2-p}{sp}}R^{-(\Theta-\e)},}
\end{equation}
where
\[
\Mcal= \|u\|_{L^\infty(Q_{R,R^{sp}}(x_0,T_0))}+ \sup_{t\in (T_0-R^{sp},T_0]}\mathrm{Tail}_{p-1,s\,p}(u(\cdot,t);x_0,R)^{p-1} + R^{sp} \norm{f}_{L^\infty(Q_{R,R^{sp}}(x_0,T_0))} +1.
\]
\end{theorem}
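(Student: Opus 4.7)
The plan is to reduce to the normalized setting of Theorem~\ref{teo:localalmost} by an explicit scaling and translation, and then to extend the resulting local Hölder estimate from a small ball to $B_{\sigma R/2}(x_0)$ by a covering. The main subtlety is in the scaling step: the interplay between the $(p-1)$-homogeneity of $(-\Delta_p)^s$ and the nonlocal tail forces the spatial rescaling to depend on $\mathcal{M}$ through a factor $\mathcal{M}^{(p-2)/sp}$, and this is precisely what produces the exponent $1+(2-p)/sp$ in the estimate and makes the covering step unavoidable.

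For any $x_1 \in B_{R/2}(x_0)$, let $\mathcal{M}_{x_1}$ denote the localized analogue of $\mathcal{M}$ built from the cylinder $Q_{R/2,(R/2)^{sp}}(x_1, T_0)$. A routine tail-splitting argument at $|z-x_0|=R$ gives $\mathcal{M}_{x_1} \leq C(N,s,p)\mathcal{M}$. With $K=K(N,s,p)\geq 1$ a constant to be fixed, set
\[
R_1 := (R/2)(K\mathcal{M}_{x_1})^{(p-2)/sp}, \quad R_2 := (R/2)^{sp}, \quad \mathcal{N} := K\mathcal{M}_{x_1},
\]
and $v(x,t) := \mathcal{N}^{-1}u(x_1 + R_1 x, T_0 + R_2 t)$. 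The identity $R_2 = R_1^{sp}\mathcal{N}^{2-p}$ built into the choice of $R_1$ makes $v$ a local weak solution of $\partial_t v + (-\Delta_p)^s v = \tilde f$ in $B_2 \times (-2^{sp}, 0]$, with $\tilde f = (R_2/\mathcal{N})f(x_1+R_1\,\cdot,\, T_0+R_2\,\cdot)$. One then checks $\|v\|_{L^\infty(B_1\times(-1,0])} \leq 1/K$ and $\|\tilde f\|_{L^\infty(B_1\times(-1,0])} \leq 1/K$ directly; the delicate point is the tail, where changing variables and splitting the integral at $|z-x_1|=R/2$ yields $\sup_t \mathrm{Tail}_{p-1,sp}(v(\cdot,t);0,1)^{p-1} \leq C(N,s,p)K^{-(p-1)} + K^{-1}$. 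Taking $K$ large makes this $\leq 1$.

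Applying Theorem~\ref{teo:localalmost} to $v$ furnishes $\sigma_0=\sigma_0(\e,s,p)\in(0,1/2]$ and $C=C(N,s,p,\e)$ with $\sup_{t\in(-\sigma_0^{sp},0]}[v(\cdot,t)]_{C^{\Theta-\e}(B_{\sigma_0})}\leq C$. Reversing the change of variables, using $\mathcal{M}_{x_1}\leq C\mathcal{M}$, $\mathcal{M}\geq 1$, and $\Theta-\e<1$ to absorb $\mathcal{M}^{(2-p)(\Theta-\e)/sp}$ into $\mathcal{M}^{(2-p)/sp}$, gives
\[
\sup_{\tau\in(T_0-(\sigma_0 R/2)^{sp},T_0]}[u(\cdot,\tau)]_{C^{\Theta-\e}(B_{R_1\sigma_0}(x_1))} \leq C\mathcal{M}^{1+(2-p)/sp}R^{-(\Theta-\e)}
\]
for every $x_1 \in B_{R/2}(x_0)$. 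Finally, setting $\sigma := \sigma_0/2$, a two-case covering argument extends this to $B_{\sigma R/2}(x_0)$: for $\xi,\eta\in B_{\sigma R/2}(x_0)$ and $\tau\in(T_0-(\sigma R)^{sp},T_0]$, if $|\xi-\eta|\leq R_1\sigma_0$ one applies the previous estimate with $x_1=\xi$, while if $|\xi-\eta|>R_1\sigma_0$ one uses $|u(\xi,\tau)-u(\eta,\tau)|\leq 2\mathcal{M}$ together with the lower bound $R_1\sigma_0 \geq cR\mathcal{M}^{(p-2)/sp}\sigma_0$ to produce the same estimate. The main obstacle is identifying the correct scaling in the first step; once $R_1 \propto \mathcal{M}^{(p-2)/sp}$ is in place the remaining steps are essentially bookkeeping.
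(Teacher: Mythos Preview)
Your proposal is correct and follows essentially the same approach as the paper: translate and rescale with spatial factor proportional to $\mathcal{M}^{(p-2)/sp}$ so that the normalized hypotheses of Theorem~\ref{teo:localalmost} hold, apply that theorem, scale back, and cover. The one notable difference is in the covering step: the paper handles the case $|\xi-\eta|$ large by inserting $J\approx\mathcal{M}^{(2-p)/sp}$ intermediate points along the segment and telescoping the local estimates, whereas you simply invoke the $L^\infty$ bound $|u(\xi,\tau)-u(\eta,\tau)|\le 2\mathcal{M}$ together with $|\xi-\eta|\ge c\sigma_0 R\mathcal{M}^{(p-2)/sp}$; both routes produce the same exponent $1+(2-p)/sp$, and your version is shorter.
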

\begin{proof}
We perform the proof in the case $x_0=0$ and $T_0=0$.\\
Let
\[
u_R(x,t):=\frac{1}{M}\,u(RM^\frac{p-2}{sp} x+y_0,R^{sp}t),\qquad \mbox{ for }x\in B_2,\, t\in(-2^{sp},0] 
\]
where $M=M(R)$ is given by
\[
\begin{aligned}
M=&\left(2+{\left(\frac{2 N \omega(N)}{sp}\right)^\frac{1}{p-1} }\right) \|u\|_{L^\infty(B_{R}\times (-R^{sp},0])}+ R^{sp} \norm{f}_{L^{\infty}(B_R \times(-R^{sp},0])}
\\
& +2(1-\sigma)^{-N-sp}\sup_{t\in (-R^{sp},0]}\mathrm{Tail}_{p-1,s\,p}(u(\cdot,t);0,R)^{p-1}+1
\end{aligned}
\]
and $y_0$ is chosen so that 
\begin{equation}
\label{x0ass}
\sigma RM^\frac{p-2}{sp}+|y_0|\leq \sigma R
\end{equation}
where  $\sigma\in(0,\frac{1}{2}]$  is as in Theorem \ref{teo:localalmost}. Since $1<p<2$ and $M\geq 1$ 
, \eqref{x0ass} implies
$$
2RM^\frac{p-2}{sp}+|y_0| \leq 2RM^\frac{p-2}{sp} + \sigma R - \sigma R M^{\frac{p-2}{sp}}=RM^\frac{p-2}{sp}(2-\sigma)+\sigma R \leq 2R.
$$
Therefore $u_R$ is a local weak solution of $\partial_t u+(-\Delta_p)^s u= \tilde{f}$ in $B_2\times (-2^{sp},0]$, where
$$\tilde{f}(x,t)=\frac{R^{sp}}{M} f(RM^{\frac{p-2}{sp}}x +y_0,R^{sp}t) .$$

It is also straight forward to verify that \eqref{x0ass} implies
$$
\|u_R\|_{L^\infty(B_1\times (-1,0])}=M^{-1}\|u\|_{L^\infty(B_{RM^{\tiny\frac{p-2}{sp}}}(y_0)\times (-R^{sp},0])}\leq M^{-1}\|u\|_{L^\infty(B_{R}\times (-R^{sp},0])}\leq 1,
$$
as well as
$$\norm{\tilde{f}}_{B_1\times(-1,0]} = M^{-1}R^{sp} \norm{f}_{L^\infty(B_{RM^{\tiny\frac{p-2}{sp}}}(y_0)\times (-R^{sp},0])} \leq M^{-1}R^{sp} \norm{f}_{L^\infty(B_R\times(-R^{sp},0])} \leq 1.$$

We will now verify that also 
$$
\sup_{t\in (-1,0]}\int_{\mathbb{R}^N\setminus B_1}\frac{|u_R(z,t)|^{p-1}}{|{z}|^{N+s\,p}}\,  \dd z\leq 1.
$$
We have
\[
\begin{split}
&\int_{\mathbb{R}^N\setminus B_1}\frac{|u_R(z,t)|^{p-1}}{|z|^{N+s\,p}}\,  dz=R^{sp}M^{(p-2)-(p-1)}\int_{\R^N\setminus B_{RM^\frac{p-2}{sp}}(y_0)}\frac{|u(y,R^{sp}t)|^{p-1}}{|y-y_0|^{N+s\,p}}\,  \dd y\\
&=R^{sp}M^{-1}\left(\int_{B_R \setminus B_{RM^{\tiny \frac{p-2}{sp}}}(y_0)}\frac{|u(y,R^{sp}t)|^{p-1}}{|y-y_0|^{N+s\,p}}\,  \dd y  +\int_{\R^N\setminus B_R}\frac{|u(y,R^{sp}t)|^{p-1}}{|y-y_0|^{N+s\,p}}\,  \dd y  \right)\\
&=I_1+I_2.
\end{split}
\]

We note that $|y_0|\leq \sigma R$, which implies that if $y\in B_R^c$, then $|y-y_0|\geq (1-\sigma) {|y|}$. Therefore 
$$
\sup_{t\in (-1,0]}I_2\leq R^{sp}M^{-1}(1-\sigma)^{-N-sp}\sup_{t\in (-R^{sp},0]}\int_{\R^N\setminus B_R} \frac{|u(y,t)|^{p-1}}{|y|^{N+s\,p}}\,  \dd y\leq \frac 12
$$ 
by the choice of $M$. For $I_1$, we instead have
\[
\begin{split}
\sup_{t\in (-1,0]}I_1&\leq \|u\|^{p-1}_{L^\infty(B_{R}\times (-R^{sp},0])}R^{sp}M^{-1}\int_{\R^N \setminus B_{RM^\frac{p-2}{sp}}}\frac{1}{|\tilde y|^{N+s\,p}}\,  d\tilde y\\
&=\|u\|^{p-1}_{L^\infty(B_{R}\times (-R^{sp},0])}R^{sp}M^{-1}\int^\infty_{RM^\frac{p-2}{sp}}{ N\omega(N)} r^{-1-sp}   dr\\
&=\|u\|^{p-1}_{L^\infty(B_{R}\times (-R^{sp},0])}\frac{M^{-(p-1)}{ N\omega(N)} }{sp}\\
&\leq \frac 12,
\end{split}
\]
{again by the choice of $M$}.
By Theorem \ref{teo:localalmost}, $u_R$ satisfies the estimate
\[
\sup_{t\in (-\sigma^{sp},0]}[u_R(x,t)]_{C^{\Theta-\e}(B_{\sigma })}\leq C,\quad C=C(N,s,p,\e).
\]
By scaling back, we obtain
\begin{equation}
\label{notfinalest}
\sup_{t\in (-(\sigma R)^{sp},0]}[u(x,t)]_{C^{\Theta-\e}\left(B_{\sigma RM^{\frac{p-2}{sp}}(y_0) }\right)}\leq CM^{1+\frac{2-p}{sp} (\Theta-\e)}R^{-(\Theta-\e)}.
\end{equation}
Since this is valid for all $y_0$ satisfying \eqref{x0ass}, we can, by varying $y_0$, cover the whole $B_{\sigma R/2}$ and obtain
\begin{equation}
\label{eq:finalest}
\sup_{t\in (-(\sigma R)^{sp},0]}[u(x,t)]_{C^{\Theta-\e}(B_{\frac{\sigma R}{2}})}\leq CM^{1+\frac{2-p}{sp}}R^{-(\Theta-\e)}.
\end{equation}
Once this is settled, this proves the theorem.

Let us provide the details of how to vary $y_0$. First of all, if $M^\frac{p-2}{sp}\geq 1/2$, then \eqref{notfinalest} with $y_0=0$ implies \eqref{eq:finalest} directly. Therefore, we assume $M^\frac{p-2}{sp}{<}1/2$ in what follows.
We need to estimate 
$$
\frac{|u(x,t)-u(y,t)|}{|x-y|^{\Theta-\e}},\quad x,y\in B_\frac{\sigma R}{2},t\in (-(\sigma R)^{sp},0].
$$
Take $x,y\in B_{\sigma R/2}$ and fix $t\in (-(\sigma R)^{sp},0]$. If $|x-y|\leq  \sigma RM^\frac{p-2}{sp}$, it follows that $$x,y\in B_{\sigma RM^\frac{p-2}{sp}}((x-y)/2)$$ and 
$$
\sigma RM^\frac{p-2}{sp}+|(x-y)/2|\leq \sigma R/2+\sigma R/4\leq  3\sigma R/4\leq \sigma R$$
so that the choice $y_0=(x-y)/2$ is admissible for \eqref{x0ass}. We may therefore apply \eqref{notfinalest} directly by choosing ${y_0}=(x-y)/2$ to obtain 
$$
\frac{|u(x,t)-u(y,t)|}{|x-y|^{\Theta-\e}}\leq CM^{1+\frac{2-p}{sp} (\Theta-\e)}R^{-(\Theta-\e)} { \leq C M^{1+ \frac{2-p}{sp}} R^{-(\Theta-\e)}}.
$$
 If instead $|x-y|\geq \sigma  RM^\frac{p-2}{sp}$, we take
 $$z_i=x+\frac{y-x}{\lceil M^\frac{2-p}{sp}\rceil}i, \quad i=0, \ldots,  J=\lceil M^\frac{2-p}{{sp}}\rceil.
$$
Since $x,y\in B_{\sigma R/2}$, we have $|z_i-z_{i+1}| = \frac{\abs{x-y}}{\lceil M^\frac{2-p}{sp}\rceil} < \sigma RM^\frac{p-2}{sp}$, $z_0=x$ and $z_J=y$. This choice implies that $$z_i,z_{i+1}\in B_{{\sigma} RM^\frac{p-2}{sp}/2}((z_i-z_{i+1})/2)$$ and
$$
\sigma RM^\frac{p-2}{sp}+|(z_i-z_{i+1})/2|\leq \sigma R/2+\sigma R/4 \leq \sigma R,
$$
as before. Therefore, the choice $y_0=(z_i-z_{i+1})/2$ is admissible for \eqref{x0ass}. We can therefore apply \eqref{notfinalest}  with $y_0=(z_i-z_{i+1})/2$ and obtain 
$$
\frac{|u(z_i,t)-u(z_{i+1},t)|}{|z_i-z_{i+1}|^{\Theta-\e}}\leq CM^{1+\frac{2-p}{sp} (\Theta-\e)}R^{-(\Theta-\e)}.
$$
By the triangle inequality, we can now conclude
\[
\begin{split}
|u(x,t)-u(y,t)|\leq \sum_{i=0}^J|u(z_i,t)-u(z_{i+1},t)|&\leq \sum_{i=0}^JCM^{1+\frac{2-p}{sp} (\Theta-\e)}R^{-(\Theta-\e)}|z_i-z_{i+1}|^{\Theta-\e}\\
&{\leq \sum_{i=0}^JCM^{1+\frac{2-p}{sp} (\Theta-\e)}R^{-(\Theta-\e)} \left(\frac{\abs{x-y}}{\lceil M^{\frac{2-p}{sp}}\rceil}\right)^{\Theta-\e}}\\
&\leq C J M^{1+\frac{2-p}{sp} (\Theta-\e)}R^{-(\Theta -\epsilon)} M^{\frac{p-2}{sp}(\Theta -\e)} \abs{x-y}^{\Theta -\e} \\
&\leq CM^{1+\frac{2-p}{sp}}R^{-(\Theta-\e)}|x-y|^{\Theta -\e} 
\end{split}
\]
where we have used that $J= \lceil M^{\frac{2-p}{sp}}\rceil \leq M^\frac{2-p}{sp}+1\leq 2M^\frac{2-p}{sp}$.
This proves \eqref{eq:finalest} and the proof is complete.
\end{proof}
\begin{remark}\label{rem:covering} We note that by a covering argument, we may, under the assumptions of Theorem \ref{teo:new}, obtain the estimate
\begin{equation}
\label{eq:nonsigmaest}
\sup_{t\in \left(T_0-(\frac{R}{2})^{s\,p},T_0\right]} [u(\cdot,t)]_{C^{\Theta-\e}(B_{ \frac{R}{2}}(x_0))}\leq C\Mcal^{1+\frac{2-p}{sp}}R^{-(\Theta-\e)},
\end{equation}
where
$$
\Mcal= \|u\|_{L^\infty(Q_{R,R^{sp}}(x_0,T_0))}+ \sup_{t\in (T_0-R^{sp},T_0]}\mathrm{Tail}_{p-1,s\,p}(u(\cdot,t);x_0,R)^{p-1} + R^{sp} \norm{f}_{L^\infty(Q_{R,R^{sp}}(x_0,T_0))} +1.
$$
Indeed, we may cover $B_{\frac{R}{2}(x_0)}\times (-(\frac{R}{2})^{{sp}}+T_0,T_0]$ with a finite number of cylinders of the form 
$$
B_{\frac{\sigma R}{2}}(x_i)\times \Big(-(\frac{\sigma R}{2})^{sp}+t_j,t_j\Big],\quad i=1,\ldots,n,\quad j=1,\ldots,m,
$$
where $B_{\frac{R}{2}}(x_i)\subset B_R(x_0)$ and $(-(\frac{R}{2})^{sp}+t_j,t_j]\subset (-R^{sp}+T_0,T_0]$. We may then apply Theorem \ref{teo:new} to each of these cylinders and obtain 
$$
{\sup_{t\in \left[t_j-(\frac{\sigma R}{2})^{s\,p},t_j\right]} [u(\cdot,t)]_{C^{\Theta-\e}(B_{\frac{\sigma R}{2}}(x_i))}\leq CM^{1+\frac{2-p}{sp}}R^{-(\Theta-\e)},}
$$
where
$$
\Mcal= \|u\|_{L^\infty(Q_{R,R^{sp}}(x_0,T_0))}+ \sup_{t\in (T_0-R^{sp},T_0]}\mathrm{Tail}_{p-1,s\,p}(u(\cdot,t);x_0,R)^{p-1} + R^{sp} \norm{f}_{L^\infty(Q_{R,R^{sp}}(x_0,T_0))} +1.
$$
By the triangle inequality, this implies the desired inequality \eqref{eq:nonsigmaest}.
\end{remark}

\section{Time regularity}
 In this section, we prove H\"older continuity in the time variable, assuming H\"older regularity in the spatial variable. For $u\in L^1(Q_{R,r}(x_0,t_0))$, we will use the notation
\[
\overline{u}_{(x_0,t_0),R,r} = \dashint_{Q_{R,r}(x_0,t_0)}\,u\,\dd x\, \dd t.
\]
When the center $(x_0,t_0)$ is clear from the context, we simply write $\overline{u}_{R,r}$. 
   
    \begin{proposition}\label{prop:timereg}
        Let $1<p<2,\,0<s<1$ and $\Theta(s,p)=\min(\frac{sp}{p-1},1)$. Suppose that $u$ is a local weak solution of 
$$
\partial_t u+(-\Delta_p)^s u = f\text{ in } B_2\times (-2^{sp},0]
$$
as in Definition \ref{subsupsolution},  where 
$$
\norm{u}_{L^{\infty}(B_1 \times{(-1,0]})} \leq 1, \quad \sup_{t \in {(-1,0]}} \mathrm{Tail}_{p-1,s\,p}(u(\cdot,t);0,1) \leq 1,  \quad \text{and} \quad \norm{f}_{L^\infty(B_1 \times (-1,0])} \leq 1.
$$
 Further, assume that for some $\delta\in(s,\Theta(s,p))$, there exists a positive constant $K_\delta$ depending on $\delta$ such that
\begin{equation}
\label{eq:holderseminorm-condit}
\sup_{t\in {(-2^{-sp},0]}}[u(\cdot,t)]_{C^\delta(B_{1/2})}\leq K_\delta.
\end{equation}
Then there is a constant $C= C(N,s,p,K_\delta,\delta)>0$ such that 
$$
\abs{u(x,t)-u(x,\tau)} \leq C\, \abs{t-\tau}^{\gamma},\qquad \text{ for all } x\in B_\frac{1}{4}\text{ and for all } t,\tau \in (-4^{-sp},0] 
$$
where 
$$
\gamma = \frac{1}{\dfrac{s\,p}{\delta}+(2-p)}. 
$$
In particular, $u\in C^\gamma_{t}(Q_{\frac{1}{4},4^{-sp}})$.
    \end{proposition}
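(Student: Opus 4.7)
Fix $x_0\in B_{1/4}$ and two times $\tau<t$ in $(-4^{-sp},0]$. The plan is to quantify $|u(x_0,t)-u(x_0,\tau)|$ by comparing both values against a spatial average $\bar u(s)=\int_{B_r(x_0)}\phi(x)u(x,s)\,\dd x$, where $\phi\in C_c^\infty(B_r(x_0))$ is a nonnegative bump with $\int\phi=1$ and $|\nabla\phi|\le C r^{-N-1}$. The radius $r\in(0,\tfrac18]$ is a free parameter to be optimized at the end in terms of $t-\tau$.

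First I would use the weak formulation of the equation with a test function of the form $\phi(x)\chi(s)$, where $\chi$ smoothly approximates $\mathbf 1_{[\tau,t]}$; combined with a standard Steklov averaging in time (using $u\in C(J;L^2_{\loc})$), this yields
\begin{equation*}
\int_{B_r(x_0)}\phi(x)\bigl(u(x,t)-u(x,\tau)\bigr)\,\dd x \;=\; -\int_\tau^t\!\!\iint_{\R^{2N}}J_p\bigl(u(x,s)-u(y,s)\bigr)\bigl(\phi(x)-\phi(y)\bigr)\,\dd\mu\,\dd s + \int_\tau^t\!\!\int_{B_r(x_0)} f\phi\,\dd x\,\dd s.
\end{equation*}
Because $\int\phi=1$, the spatial H\"older assumption \eqref{eq:holderseminorm-condit} gives $|u(x_0,s)-\bar u(s)|\le K_\delta r^\delta$ for $s\in\{\tau,t\}$, so the whole problem reduces to estimating the right-hand side above.

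The nonlocal double integral is the main work. I would split $\R^{2N}$ into a local piece $\{|x-y|<2r\}$ and a nonlocal piece $\{|x-y|\ge 2r\}$, symmetrized so that in the latter only $\phi(x)$ appears. On the local piece, both $x,y\in B_{2r}(x_0)\subset B_{1/2}$, so $|u(x)-u(y)|^{p-1}\le K_\delta^{p-1}|x-y|^{\delta(p-1)}$ and $|\phi(x)-\phi(y)|\le Cr^{-N-1}|x-y|$; integrating the resulting kernel $|x-y|^{\delta(p-1)+1-N-sp}$ over the ball of radius $2r$ (exponent automatically admissible since $\delta<sp/(p-1)$) gives a bound $CK_\delta^{p-1}r^{\delta(p-1)-sp}$. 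On the nonlocal piece I split further based on whether $y\in B_{1/2}$: for $y\in B_{1/2}\setminus B_{2r}(x_0)$ I again use the spatial H\"older bound and compute $\int_r^{1/2}\rho^{\delta(p-1)-sp-1}\,\dd\rho\simeq r^{\delta(p-1)-sp}$ (here the assumption $\delta<\Theta(s,p)\le sp/(p-1)$ is essential), whereas for $y\notin B_{1/2}$ the tail bound and $\|u\|_\infty\le1$ yield a harmless $O(1)$ contribution. Together with $|\int_\tau^t\!\!\int f\phi|\le (t-\tau)$, this produces
\begin{equation*}
|u(x_0,t)-u(x_0,\tau)|\;\le\; 2K_\delta r^\delta + C(t-\tau)\Bigl(K_\delta^{p-1}r^{\delta(p-1)-sp}+1\Bigr),
\end{equation*}
with $C=C(N,s,p,K_\delta,\delta)$.

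Finally, choosing $r=(t-\tau)^{1/(sp+\delta(2-p))}$ balances the two dominant terms, since a direct computation gives $r^\delta=(t-\tau)^{\gamma}$ and $(t-\tau)r^{\delta(p-1)-sp}=(t-\tau)^{\gamma}$ with $\gamma=1/(sp/\delta+(2-p))$. The remaining $O(t-\tau)$ term is absorbed because $\gamma\le 1$ and $t-\tau\le 1$. If the optimal $r$ exceeds $\tfrac18$, then $t-\tau$ is bounded below by a universal constant and the estimate follows trivially from $\|u\|_\infty\le1$. The main obstacles I anticipate are: (i) making the weak formulation rigorous for the discontinuous-in-time test function (handled by Steklov regularization and using $u\in C(J;L^2_{\loc})$), and (ii) book-keeping the tail/H\"older split in the nonlocal part so that the constant depends only on $K_\delta$, $\delta$, and the structural parameters, without any hidden dependence on quantities that could blow up with the scaling.
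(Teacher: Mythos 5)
Your proposal is correct, and it reaches the conclusion by a genuinely more direct route than the paper. The ingredients you use are essentially the same: (i) the spatial H\"older bound gives $|u(x_0,s)-\bar u(s)|\le C K_\delta r^\delta$ for a normalized spatial bump $\phi$ supported in $B_r(x_0)$; (ii) the weak formulation with a time-independent test function $\phi(x)$ controls $\bar u(t)-\bar u(\tau)$ by a time integral of the nonlocal form plus the source; (iii) that nonlocal form is split into a local piece and a tail piece, each bounded using the H\"older seminorm and the tail assumption, yielding the single-time-slice bound $C(K_\delta^{p-1}r^{\delta(p-1)-sp}+1)$. Both exponent conditions $\delta>s>(sp-1)/(p-1)$ (integrability of the local piece) and $\delta<sp/(p-1)$ (decay of the intermediate piece) are automatic from $\delta\in(s,\Theta(s,p))$. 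Note also that the paper's Definition~\ref{subsupsolution} already contains the two endpoint terms, so the Steklov regularization you flag as an obstacle is not actually needed: taking $\phi(x,t)=\eta(x)$ in \eqref{wksol} directly gives the identity for $\bar u(T_1)-\bar u(T_0)$.

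Where you diverge from the paper is the final step. The paper does not telescope and optimize $r$; instead it converts the bound into a Campanato-type estimate for the oscillation of $u$ around its space-time mean over $Q_{r,\theta}$ and invokes a characterization of Campanato spaces in a parabolic metric (\cite[Theorem~3.2]{Gorka}). That route forces a two-case split depending on whether $sp+\delta(2-p)\ge 1$ or $<1$, because the anisotropic "distance" $\max\{|x-y|,|t-\tau|^{1/(sp+\delta(2-p))}\}$ (resp.\ $\max\{|x-y|^{sp+\delta(2-p)},|t-\tau|\}$) is only a metric in the respective regime. Your direct argument — combining the three triangle-inequality steps at a fixed $x_0$ and then choosing $r=(t-\tau)^{1/(sp+\delta(2-p))}$, falling back to the trivial $L^\infty$ bound when $t-\tau$ is bounded away from zero — sidesteps the Campanato embedding entirely and needs no case distinction. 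The tradeoff is that the paper's Campanato route implicitly yields joint parabolic H\"older continuity in the associated metric, slightly more than the stated conclusion, while your approach gives exactly what the proposition claims with less overhead.
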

    \begin{proof}
   Take $(x_0,t_0)\in Q_{\frac{1}{4},4^{-sp}}$ and choose 
\[
0<r<\frac{1}{8},\qquad 0<\theta<\frac{1}{2}\left(2^{-sp} - 4^{-sp} \right).
\]
By construction,
\[
Q_{r,\theta}(x_0,t_0)\subset B_\frac{3}{8}\times \left(-2^{-sp},0\right].
\]
Let $\eta\in C_c^\infty(B_{r/2}(x_0))$ be a non-negative smooth function, such that
\[
\eta\equiv \|\eta\|_{L^\infty(B_{r/2}(x_0))} \mbox{ on } B_{r/4}(x_0),\qquad \overline{\eta}_{r}={\frac{1}{|B_r(x_0)|}\int_{B_r(x_0)}\eta\,\dd x}=1\,\,\qquad \mbox{ and }\qquad \|\nabla \eta\|_{L^\infty(B_{r/2}(x_0))}\le \frac{C}{r},
\] 
for some constant $C=C(\|\eta\|_{L^\infty(B_{r/2}(x_0))},N)>0$. Note that since the average of $\eta$ is $1$, we have the estimate
\[
\|\eta\|_{L^\infty(B_{r/2}(x_0))}=\frac{1}{|B_{r/4}(x_0)|}\,\int_{B_{r/4}(x_0)} \eta\,\dd x\le \frac{|B_{r}(x_0)|}{|B_{r/4}(x_0)|}\,\overline{\eta}_r=4^N.
\]
By the triangle inequality,
\begin{equation}\label{eq:Ai}
\begin{aligned}
\dashint_{Q_{r,\theta}(x_0,t_0)}|u(x,t)-\overline{u}_{r,\theta}|\,\dd x\, \dd t&\leq \dashint_{Q_{r,\theta}(x_0,t_0)}\left|u(x,t)-\overline{(u\,\eta)}_r(t)\right|\,\dd x\,\dd t\\
&+\dashint_{Q_{r,\theta}(x_0,t_0)}\left|\overline{u}_{r,\theta}-\overline{(u\,\eta)}_{r,\theta}\right|\,\dd x\,\dd t \\
&+ \dashint_{Q_{r,\theta}(x_0,t_0)}\left|\overline{(u\,\eta)}_{r,\theta}-\overline{(u\,\eta)}_r(t)\right|\,\dd x\,\dd t\\
& =: A_1+A_2+A_3,
\end{aligned}
\end{equation}
where
\[
\overline{(u\,\eta)}_{r}(t) = \dashint_{B_r(x_0)}u(y,t)\,\eta(y)\, \dd y.
\]
Following the same steps in the proof of \cite[Proposision 6.2]{BLS} we arrive at
\begin{equation}\label{eq:time-reg-A2}
A_2 \leq A_1+A_3, 
\end{equation}
\begin{equation}\label{eq:time-reg-A1}
\begin{split}
A_1\le C\,K_\delta\, r^{\delta}, \qquad  \mbox{ for some } C=C(N,s,p)>0,
\end{split}
\end{equation}
and 
\begin{equation}
\label{eq:time-reg-A3b}
A_3\le \sup_{T_0,T_1\in (t_0-\theta,t_0]}\left|\overline{(u\,\eta)}_r(T_0) - \overline{(u\,\eta)}_r(T_1)\right|. 
\end{equation}
For $T_0,T_1 \in (t_0-\theta,t_0]$ with $T_0<T_1$, we use the weak formulation \eqref{wksol} with $\phi(x,t)=\eta(x)$, to obtain
\begin{equation}
\label{eq:time-reg-A3bb}
\begin{split}
|B_r(x_0)|\,\Big|\overline{(u\,\eta)}_r(T_0) &- \overline{(u\,\eta)}_r(T_1)\Big| = \left|\int_{B_r(x_0)}u(x,T_0)\,\eta(x)\, \dd x - \int_{B_r(x_0)}u(x,T_1)\,\eta(x)\, \dd x\right|\\
&=\left|\int_{T_0}^{T_1}\iint_{\mathbb{R}^N\times\mathbb{R}^N}J_p(u(x,\tau)-u(y,\tau))\,(\eta(x)-\eta(y))\dd \mu(x,y)\dd \tau\right|\\
& \leq \left|\int_{T_0}^{T_1}\iint_{B_r(x_0)\times B_r(x_0)}J_p(u(x,\tau)-u(y,\tau))\,(\eta(x)-\eta(y)) \dd \mu(x,y) \dd\tau\right|\\
&+ 2\,\left|\int_{T_0}^{T_1}\iint_{(\mathbb{R}^N\setminus B_r(x_0))\times B_{r/2}(x_0)}J_p(u(x,\tau)-u(y,\tau))\,\eta(x)\dd \mu(x,y)\dd \tau \right|\\
& + \left| \int_{T_0}^{T_1} \int_{B_r(x_0)} f(x,\tau)\eta(x) \dd x \dd \tau \right| \\
& = J_1 + J_2 + J_3. 
\end{split}
\end{equation}
For $J_1$, the estimate performed in the proof of \cite[Proposition 6.2]{BLS} still applies and gives
\begin{equation}\label{eq:time-reg-J1}
   J_1\leq C K_{\delta}^{p-1} \theta r^{N+ \delta(p-1)-sp},
\end{equation}
for some constant $C=C(N,s,p,\delta)>0$. Regarding $J_2$, we may follow the steps in the proof of  \cite[Proposition 5.7]{T} and obtain
\begin{equation}\label{eq:time-reg-J2}
    J_2 \leq C \theta r^N \left( 1 + r^{\delta(p-1) -sp}\right) \leq C\theta r^{N+\delta(p-1)-sp},
\end{equation}
for some constant $C=C(N,s,p,\delta,K_{\delta})>0$.
Finally, we estimate $J_3$ as follows
\begin{equation}\label{eq:time-reg-J3}
\begin{aligned}
 J_3 &\leq \int_{T_0}^{T_1} \int_{B_r(x_0)} \abs{f(x,\tau)}\eta(x) \dd x \dd t \leq \norm{f}_{L^\infty(B_r(x_0)\times(T_0,T_1))} \int_{T_0}^{T_1} \int_{B_r(x_0)} \eta(x) \dd x \dd t\\
 & \leq (T_1-T_0) \abs{B_r(x_0)}  \,\norm{f}_{L^\infty(B_1\times(-1,0))} \dashint_{B_r(x_0)} \eta(x) \dd x \leq C(N) \theta r^{N} .
 \end{aligned}
\end{equation}
The combination of \eqref{eq:time-reg-A3b}--\eqref{eq:time-reg-J3} implies
\begin{equation}\label{eq:time-reg-A3c}
    A_3 \leq C(1+K_\delta^{p-1}) \theta(1+ r^{\delta(p-1) -sp}) \leq C(1+K_\delta^{p-1}) \theta r^{\delta(p-1) -sp}.
\end{equation}
Therefore, by \eqref{eq:Ai} together with \eqref{eq:time-reg-A2}
\begin{equation}\label{eq:time-req-campanato}
\dashint_{Q_{r,\theta}(x_0,t_0)}|u(x,t)-\overline{u}_{r,\theta}|\,\dd x\,\dd t \leq C\left( r^\delta + \theta r^{\delta(p-1) -sp} \right),
\end{equation}
for some constant $C=C(N,s,p,\delta,K_{\delta})>0$.\\
We now split the rest of the proof into two cases.\\
\textbf{Case 1:} $sp +\delta(2-p) \geq 1$. Choose
$$\theta = \frac{1}{2}\left(\frac{1}{2^{sp}} - \frac{1}{4^{sp}}\right)r^{sp+\delta(2-p)} .$$
Then \eqref{eq:time-req-campanato} reads
\[
\dashint_{Q_{r,\theta}(x_0,t_0)} \abs{u(x,t)-\Bar{u}_{r,\theta} } \dd x \dd t \leq Cr^{\delta},
\]
for some constant $C=C(N,s,p,\delta,K_{\delta})>0$. \\
Now we use the characterization of the Campanato spaces in $\R^{n+1}$ with a general metric in \cite{Gorka}, see also \cite{dapra}. Our setting does not fit directly in the context considered there, since we only work with cylinders that are one sided in the time direction, that is $(t-r^{sp+\delta(2-p)},t]\times B_r(x)$ instead of $(t-r^{sp+\delta(2-p)},t+r^{sp+\delta(2-p)})\times B_r(x)$. Still, if {one follows} the proof in \cite{Gorka} with small modifications, {one} can also conclude the result in this setting. By \cite[Theorem 3.2]{Gorka}, $u$ is $\delta$-H\"older continuous in $Q_{1/4,1/4^{sp}}$ with respect to the metric
$$d((x,\tau_1),(y,\tau_2))= \max{\lbrace\abs{x-y} , \abs{\tau_2 - \tau_1}^{\frac{1}{sp+\delta(2-p)}}\rbrace}.$$
Note that since $sp+ \delta(2-p)\geq 1$, $d$ is a metric. The balls of radius $r$ for this metric are of the form $(t-r^{sp+\delta(2-p)},t+r^{sp+\delta(2-p)})\times B_r(x)$.

In particular, for any $\tau_1 , \tau_2 \in (-\frac{1}{4^{sp}} , 0]$ we have the estimate
\[
\sup_{x \in \overline{B_{1/4}}} \abs{u(x,\tau_1)-u(x,\tau_2)} \leq C \abs{\tau_1-\tau_2}^{\gamma},
\]
where the constant $C= C(\delta, K_\delta ,N, s, p)>0$, and
$$\gamma = \frac{\delta}{sp + \delta(2-p)} = \frac{1}{\frac{sp}{\delta} + 2-p}.$$
\textbf{Case 2:} $sp +\delta(2-p)<1$. In this case, we make the choice
$$r = \frac{1}{2}\left(\frac{1}{2^{sp}} - \frac{1}{4^{sp}} \right) \theta^{\frac{1}{sp + \delta(2-p)}}.$$
From \eqref{eq:time-req-campanato} we obtain
\[
\dashint_{Q_{r,\theta}(x_0,t_0)} \abs{u(x,t)-\Bar{u}_{r,\theta} } \dd x \dd t \leq C \theta^{\frac{\delta}{sp+\delta(2-p)}}.
\]
Again, by \cite[Theorem 3.2]{Gorka} we obtain that $u$ is {$\frac{\delta}{sp+\delta(2-p)}$-H\"older continuous}, in $Q_{1/4,1/4^{sp}}$ with respect to the metric 
$$\tilde{d}=\max{\left\lbrace \abs{x-y}^{sp+\delta(2-p)}, \abs{\tau_1-\tau_2}   \right\rbrace}.$$
As $sp+\delta(2-p)<1$, {$\tilde{d}$} is indeed a metric. In particular, we have the estimate
\[
\sup_{x \in \overline{B_{1/4}}} \abs{u(x,\tau_1)-u(x,\tau_2)} \leq C \abs{\tau_1-\tau_2}^{\gamma},
\]
for some constant $C= C(\delta, K_\delta ,N, s, p)>0$, where 
$$\gamma = \frac{\delta}{sp+\delta(2-p)}.$$
    \end{proof}

\section{Proof of the main theorem}
We are now ready to give the proof of our main theorem, namely Theorem \ref{teo:1}, that encompasses both the spatial regularity and regularity in time.

\begin{proof}[~Proof of Theorem \ref{teo:1}] The proof follows from a combination of {Theorem \ref{teo:new}}, Remark \ref{rem:covering} and Proposition \ref{prop:timereg}. We spell out the details below.
It remains to prove the regularity in time. We  assume $x_0=0$ and $T_0=0$ and argue as in the proof of {Theorem \ref{teo:new}}. Let 
\[
u_R(x,t):=\frac{1}{M}\,u(RM^\frac{p-2}{sp} x+y_0,R^{sp}t),\qquad \mbox{ for }(x,t)\in B_2\times (-2^{sp},0]
\]
where $y_0$ is chosen so that 
\begin{equation}
\label{x0ass2}
\frac12 RM^\frac{p-2}{sp}+|y_0|\leq  R/2
\end{equation}
and
\[
\begin{aligned}
M=&\left(2+\left(\frac{2 N \omega(N)}{sp}\right)^\frac{1}{p-1}\right)\|u\|_{L^\infty(B_{R}\times (-R^{sp},0])} 
{ +2^{1+N+sp}\sup_{t\in (-R^{sp},0]}\mathrm{Tail}_{p-1,s\,p}(u(\cdot,t);0,R)}^{p-1}\\
&+ R^{sp}\norm{f}_{L^\infty(B_R\times(-R^{sp},0])} +1.
\end{aligned}
\]
Estimate \eqref{x0ass2} implies
$$
2RM^\frac{p-2}{sp}+|y_0|\leq 2R.
$$
Therefore, $u_R$ is a local weak solution of $\partial_t u+(-\Delta_p)^s u= \tilde{f}$ in $B_2\times (-2^{sp},0]$, where
$$\tilde{f}(x,t):= \frac{R^{sp}}{M} f(RM^{\frac{p-2}{sp}}x +y_0 , R^{sp}t).$$
With the notation $\tilde R=RM^\frac{p-2}{sp}$, it is straight forward to verify that \eqref{x0ass2} implies
$$
\|u_R\|_{L^\infty(B_1\times (-1,0])}=M^{-1}\|u\|_{L^\infty(B_{\tilde R}(y_0)\times (-R^{sp},0])}\leq M^{-1}\|u\|_{L^\infty(B_{R}\times (-R^{sp},0])}\leq 1.
$$
As in the proof of Theorem \ref{teo:new} we also have
$$
\sup_{t\in (-1,0]}\int_{\mathbb{R}^N\setminus B_1}\frac{|u_R(z,t)|^{p-1}}{|{z}|^{N+s\,p}}\,  dz\leq 1.
$$
We also have
\[
\begin{aligned}
\norm{\tilde{f}}_{L^\infty(B_1\times(-1,0])} &=\frac{R^{sp}}{M}
 \left\|  f(RM^{\frac{p-2}{sp}}x+ y_0, R^{sp}t)  \right\|_{L^\infty(B_1\times (-1,0])} \\ 
 &= \frac{R^{sp}}{M} \norm{f}_{L^\infty\left(B_{\tilde R}(y_0)\times(-R^{sp},0]\right)} \leq \frac{R^{sp}}{M} \norm{f}_{L^\infty(B_R\times (-R^{sp},0])} \leq 1.
 \end{aligned}
\]
The above bounds combined with {Theorem \ref{teo:new}} and Remark \ref{rem:covering} imply
\begin{equation}\label{eq:finalRxreg}
\sup_{t\in {\left(-2^{-sp},0\right]}} [u_R(\cdot,t)]_{C^{\theta}(B_{\frac12})}\leq C,
\end{equation}
for every $\theta\in(0,\Theta)$ with $C=C(N,s,p,\theta)>0$. By using this together with Proposition \ref{prop:timereg}, we obtain
\begin{equation}\label{eq:finalRtreg}
\sup_{x\in B_\frac14}[u_R(x,\cdot)]_{C^\gamma((-4^{-sp},0])}\leq C
\end{equation}
where 
$$
\gamma = \frac{1}{\dfrac{s\,p}{\theta}+(2-p)}, 
$$ and $C=C(N,s,p,\theta)$. Scaling back, \eqref{eq:finalRxreg} and \eqref{eq:finalRtreg} imply, (again with the notation $\tilde R=RM^\frac{p-2}{sp}$)
\begin{equation}\label{1}
\sup_{t\in (-(R/2)^{sp},0]}[u(x,t)]_{C^{\theta}(B_{\frac{\tilde R}{2}}(y_0))}\leq CM^{1+\frac{(2-p)}{sp}}R^{-\theta}
\end{equation}
and
\begin{equation}\label{2}
\sup_{x\in B_\frac{\tilde R}{4}(y_0) } [u(x,\cdot)]_{C^\gamma((-4^{-sp}R^{sp},0])}\leq CMR^{-\gamma}
\end{equation}
for all $y_0$ that satisfies \eqref{x0ass2}, with the constant $C=C(N,s,p,\theta)>0$. Therefore, we obtain the estimates \eqref{1} and \eqref{2} in the whole $B_{R/4}$ by varying $y_0$ as in the proof of {Theorem \ref{teo:new}}. Since this holds for all $\theta<\Theta$ and with $\gamma = ({s\,p}/{\theta}+(2-p))^{-1}$ {which converges to $\Gamma$} as $\theta\to \Theta$, this implies the desired result.
\end{proof}

\section*{Acknowledgement} The first author is supported by the seed grant from IISER Berhampur. E.L and A.T have been supported by the Swedish Research Council, grants
no. 2023-03471 and 2016-03639.
\bibliographystyle{plain}
\bibliography{bibfile}

\vskip 0.1cm
Prashanta Garain\\
Department of Mathematical Sciences\\
Indian Institute of Science Education and Research Berhampur\\
Berhampur, Odisha 760010, India\\
\emph{E-mail address:} \email{pgarain92@gmail.com} \\

Erik Lindgren\\
Department of Mathematics\\
KTH - Royal Institute of Technology
100 44, Stockholm, Sweden\\
\emph{E-mail address:} \email{eriklin@math.kth.se} \\

Alireza Tavakoli\\
Department of Mathematics\\
KTH - Royal Institute of Technology
100 44, Stockholm, Sweden\\
\emph{E-mail address:} \email{alirezat@kth.se} \\

\end{document}